\newcommand{\rom}[1]{\uppercase\expandafter{\romannumeral #1\relax}}
\numberwithin{equation}{section}
\newtheorem{theorem}{Theorem}[section]
\newtheorem{lemma}[theorem]{Lemma}
\newtheorem{proposition}[theorem]{Proposition}
\newtheorem{remark}[theorem]{Remark}
\def\a{\alpha}
\def\b{\beta}
\def\d{\delta}
\def\e{\epsilon}
\def\g{\gamma}
\def\l{\lambda}
\def\p{\phi}
\def\r{\rho}
\def\s{\sigma}
\def\bx{{\bf{x}}}
\def\bv{{\bf{v}}}
\def\bz{{\bf{z}}}
\def\bl{\bf{l}}
\def\bi{\bf{i}}
\def\bk{\bf{k}}
\def\R{\mathbb R}
\def\mL{\mathcal{L}}
\def\mF{\mathcal{F}}
\def\la{\left\langle}
\def\ra{\right\rangle}
\def\ll{\left\lVert}
\def\rl{\right\rVert}
\def\lv{\left\lvert}
\def\rv{\right\rvert}
\def\({\left(}
\def\){\right)}
\def\[{\left[}
\def\]{\right]}
\def\pt{\partial}
\def\cd{\cdot}
\def\qd{\quad}
\def\t{\tilde}
\def\sM{\sqrt{M}}
\def\bu{u}
\def\b{\epsilon}
\begin{document}

\title{Hypocoercivity and Uniform Regularity for the Vlasov-Poisson-Fokker-Planck System with Uncertainty and Multiple Scales \footnote{This work was partially supported by NSF grants DMS-1522184 and DMS-1107291: RNMS KI-Net, by NSFC grant No. 91330203, and by the Office of the Vice Chancellor for Research and Graduate Education at the University of Wisconsin-Madison with funding from the Wisconsin Alumni Research Foundation.}}

\author{Shi Jin\footnote{Department of Mathematics, University of Wisconsin-Madison, Madison, WI 53706, USA (sjin@wisc.edu), and  Institute of Natural Sciences, Department of Mathematics, MOE-LSEC and SHL-MAC, Shanghai Jiao Tong University, Shanghai 200240, China}  \ \ and \     Yuhua Zhu\footnote{Department of Mathematics, University of Wisconsin-Madison, Madison, WI 53706, USA (yzhu232@wisc.edu)}}
\date{\today}
\maketitle

\begin{abstract}
We study the Vlasov-Poisson-Fokker-Planck system with uncertainty and multiple scales. Here the uncertainty, modeled by random variables, enters the solution through initial data, while the multiple scales lead the system to its
high-field or parabolic regimes. With the help of  proper Lyapunov-type inequalities, under some mild conditions on the initial data, the regularity of the solution in the random
space, as well as exponential decay of the solution to the global Maxwellian,
are established under Sobolev norms,
 which are {\it uniform} in terms of the scaling parameters. These are the
first hypocoercivity results for a nonlinear kinetic system with random
input, which are important for the understanding of
the sensitivity of
the system under random perturbations, and for the establishment of spectral
convergence of popular numerical methods for uncertainty quantification based
on (spectrally accurate) polynomial chaos expansions.

\end{abstract}

{\small
{\bf Key words.}  Vlasov-Poisson-Fokker-Planck system, Uncertainty Quantification, random input,  hypocoercivity

{\bf AMS subject classifications.}

}

\section{Introduction}
\label{sec:Intro}

In this paper we are interested in the Vlasov-Poisson-Fokker-Planck (VPFP) system with random inputs. The VPFP system describes the Brownian motion of a large system of particles in a surrounding bath. One of the applications is in electrostatic plasma, in which one considers the interactions between the electrons and a surrounding bath via the Coulomb force \cite{Chand}. The uncertainty in
a kinetic equation  can arise from the initial and boundary data, the forcing term, collisional kernels, etc,  due to modeling and measurement errors. In this paper we will mainly focus on the case in which the initial data contain random inputs, modeled by random variables with given probability density functions.
The goal is to understand the regularity of the solution in the random space,
as well as its long-time behavior.  Such a study is important in order to understand the
{\it sensitivity} of the system under random perturbations. It is also the
basis to study the convergence of numerical schemes for such problems,
for example, the popular methods for uncertainty quantification, such as
polynomial chaos expansion based stochastic Galerkin or stochastic collocation
methods \cite{GS, GWZ, xiu2002wiener, Xiu}, which enjoy a spectral convergence,  if the
solution has the desired regularity in the random space.

While there have been many developments in the regularity of the solution to elliptic or parabolic equations with uncertainties \cite{babuska2004galerkin, cohen2010convergence, cohen2011analytic}, such study has been scarce for hyperbolic type equations \cite{GotXiu, TangZhou, ZhouTang, branicki2013fundamental, despres2015uncertainty} because of the poor regularity of the solution. The uncertainty quantification, while
popular in many types of partial differential equations, has seldomly been
studied for kinetic equations until very recently \cite{zhu2016vlasov, hu2016stochastic, jin2015asymptotic, jin2016asymptotic}. Typically kinetic
equations possess multiple scales, leading to various different asymptotic regimes,
demanding carefully designed numerical methods to handle different asymptotic
behavior of the equations. For deterministic kinetic equations, one efficient
multiscale paradigm is the {\it Asymptotic-Preserving} schemes, which
mimic the asymptotic transitions from kinetic equations  to their  diffusion or hydrodynamic limits in the numerically discrete space \cite{Jin99, jin2010asymptotic}. This concept was extended
to random kinetic equations in \cite{ jin2015asymptotic}, in the framework of
{\it stochastic Asymptotic-Preserving} methods. Convergence study of
these methods clearly requires the understanding of the regularity of
the solution. Moreover, the correct asymptotic behavior of the numerical
methods in various asymptotic regimes also require the understanding of
the long time behavior, and how the decay rates depend on the small scaling parameters. For linear transport equation with random isotropic scattering in diffusive regime, such regularity and asymptotic
behavior were first studied in \cite{jin2016august}, in which the regularity
of the solution was established, as well as its exponential decay    toward the
local equilibrium, all {\it uniformly} in the mean free path (or Knudsen
number). Uniform regularity for the semiconductor Boltzmann equation, in which
the scattering is anisotropic and random,  was established
in \cite{Jin-LiuL}.  Called
{\it hypocoercivity} by Villani \cite{dric2009hypocoercivity}, the property
of uniform
exponential decay toward the global equilibrium \cite{dolbeault2015hypocoercivity} was further explored in \cite{li2017uniform} for general linear
kinetic equations with uncertainty. So far there has been no work on
hypocoercivity for nonlinear kinetic equations with uncertainty
 with uniform (in small scaling parameters)
estimate. The purpose
of this paper is to conduct such a study for the nonlinear VPFP  system with random
initial input. 

Depending on different scales, the VPFP system possesses two distinguished
asymptotic limits, the high field limit and the parabolic limit. We will
treat these different scalings in a unified framework.
With the help of  proper Lyapunov-type inequalities, we first develop two energy estimates for the microscopic (VPFP) and macroscopic (limiting) systems, which allows us to obtain the {\it uniform}--in
terms of the scaling parameters--regularity in the random space
 of the perturbative solution of the nonlinear VPFP system near the global
Maxwellian. Under some mild conditions on the initial data, we found that the solution will decay exponentially to the global Maxwellian in a rate
{\it independent} of the small scaling parameter. Our results also reveal
that the initial random perturbation will die out exponentially in time,
uniformly in the scaling parameter, thus
the solution is insensitive to the initial random perturbation, in all
asymptotic regimes.

For the deterministic VPFP system, the regularity and convergence toward the global
Maxwellian or asymptotic limits  were conducted in, for examples, \cite{arnold2001low, duan2010kinetic, goudon2005multidimensional,
nieto2001high, soler1997asymptotic, hwang2013vlasov, renjun2015time}.
Our energy estimates rely on the hypocoercivity results  of \cite{duan2010kinetic}, and the energy estimates in \cite{hwang2013vlasov} with suitable modification to effectively separate the microscopic and macroscopic scales in order
to get better estimates in the asymptotic regimes.
When the small scaling parameters are involved, which was not considered
in \cite{hwang2013vlasov}, it is crucial to 
get rid of the bad dependence on these parameters in the initial condition and rate of convergence to the global
equilibrium. Therefore  we  have not only  extended the regularity results to the random space, but also improved the micro-macro energy estimates by separating the  microscopic energy from the macroscopic energy suitably, so
when the small scales are involved, we can
get uniform convergence rate towards the global equilibrium,
and a milder initial condition at the same time. As a result,
we get an exponential decay of the perturbative solution--independent of the small parameter-- under some mild initial condition, which leads to a uniform regularity of the solution in random space for both high field and parabolic limits.

In this paper, for clarity of the presentation and notations, we carry out all analysis in one space dimension for all independent variables.  Its extension to higher dimension in $x, v$ and $z$ is straightforward with some changes of the constants (see \cite{Zhu_BE_UQ} for example). 

This paper is organized as follows. Section \ref{models} gives an introduction of the VPFP system with uncertainty and its two different asymptotic regimes. The main results are stated in Section \ref{main results}. Then in Sections \ref{sec:micro} - \ref{sec:macro} we prove the energy estimates from microscopic and macroscopic systems respectively. The uniform regularity of the perturbative solution is obtained in Section \ref{sec: expo decay}.


\section{The VPFP System with Uncertainty and Asymptotic Scalings}
\label{models}
\subsection{The VPFP System with Uncertainty}
In the dimensionless VPFP system with uncertainty, the time evolution of particle density distribution function $f(t,x,v,z)$ under the action of an electrical potential $\p(t,x, z)$  satisfies
\begin{equation}
\begin{cases}
&\partial_tf + \frac{1}{\d} v\pt_xf - \frac{1}{\b} \pt_x\p\pt_vf = \frac{1}{\d\b}\mF f, \label{eq:VPFP}\\
&-\pt_x^2\p= \rho - 1, \quad t>0, \quad \bx, \bv \in  \mathbb{R}, \ \bz \in I_\bz\subseteq\mathbb{R},
\end{cases}
\end{equation}
with initial data:\\
\begin{equation}
f(0, x, v, z) = f^0( x, v, z), \quad x, v\in \mathbb{R}, \ z \in I_\bz\subseteq\mathbb{R} .
 \label{eq:VPFP_bc}
\end{equation}

The distribution function $f(t x,v, z)$ depends on time $t$, position $x$, velocity $v$ and random variable $z$. $\p(t, x, z)$ is a self-consistent electrical potential and  $\rho(t, x, z)$ is  the density function defined as
 \begin{align}
 \rho(t,x, z) = \int_{\mathbb{R}^N} f (t, x, v, z) d v.
 \label{rho_def}
 \end{align}

 In the VPFP system, $\mF$ is a collision operator, describing the Brownian motion of the particles, which reads,
 \begin{align}
 \mF f =\pt_v\left( M \pt_v \left( \frac{f}{M}\right)\right),
 \label{define L}
 \end{align}
 where $M$ is the {\it global equilibrium} or {\it global Maxwellian},
 \begin{align}
M = \frac{1}{\sqrt{2\pi}}e^{-\frac{|v|^2}{2}}.
\end{align}

In the dimensionless system, $\d$ is the reciprocal of the scaled thermal velocity, $\b$ represents the scaled thermal mean free path \cite{soler1997asymptotic}. There are two different regimes for this system. One is the {\it{high field regime}}, where $\d =1$.
As $\b$ goes to zero, $f$ goes to the local Maxwellian $M_{\text{local}} =\frac{\rho}{\sqrt{2\pi}}e^{-\frac{|v - \pt_x \p|^2}{2}}$, and the VPFP system converges to a hyperbolic limit \cite{arnold2001low, goudon2005multidimensional,nieto2001high}:
\begin{equation}
\begin{cases}
&\pt_t \r + \pt_x\left(  \r \pt_x\p \right) = 0, \\
&-\pt^2_x\p= \rho - 1,
\end{cases}
\end{equation}
Another regime is the {\it{parabolic regime}}, where
$\d = \b$.
When $\b$ goes to zero,  $f$ goes to the global Maxwellian $M$, and the VPFP system converges to a parabolic limit \cite{PouSol}:
\begin{equation}
\begin{cases}
&\pt_t \r - \pt_x\(\pt_x\r - \r\pt_x\p \right) = 0, \\
&-\pt_x^2\p= \rho - 1.
\end{cases}
\end{equation}
In this paper, we are going to study both regimes together.

In the VPFP system with uncertainty,  the random variable $\bf{z}$ is in a properly defined probability space $(\Sigma, \mathbb{A}, \mathbb{P})$, whose event space is $\Sigma$ and is equipped with $\sigma$-algebra $\mathcal{A}$ and probability measure $\mathbb{P}$. Define $\pi(\bz): I_{\bf{z}} \longrightarrow \mathbb{R}^+$ as the probability density function of the random variable $\bf{z}(\omega)$, $\omega\in\Sigma$. So one has a corresponding $L^2$ space in the measure of,
 \begin{align}
d\mu =  d \mu(x, v, z) = \pi(z) d x \,d v\, dz .
\label{measure}
 \end{align}
With this measure, one has the corresponding Hilbert space with the following inner product and norms:
\begin{align}
\la f, g \ra  = \int_{\R}\int_{\R}\int_{I_\bz} fg \, d\mu(x,v,z),  \quad \text{or}, \quad \la \r, j \ra  =\int_{\R}\int_{I_\bz} \r j \, d\mu(x, z), \quad \text{with norm }\ll f \rl^2 = \la f , f \ra.
\label{regular norm}
\end{align}

For convenience of the readers, we list some elementary calculation on $M$ which will be used in later calculations:
\begin{align}
&\pt_v{M} = -vM,\quad \pt_v(\sM) = -\frac{v}{2}\sM;\quad\\
&\int_{\R} v^a\sM \, dv = \int_\R v^aM \, dv = 0, \quad \text{ for any odd }a;\\
& \int_{\R} M \, dv = 1, \quad \int_{\R} v^2M \, dv = 1,\quad \int_{\R}v^4Mdv = 3;\\
&\int_{\R} |v|^3M \, dv = \frac{4}{\sqrt{2\pi}}\leq 2,\quad \int_{\R} \left(\pt_v(v\sM)\right)^2\, dv = \frac{3}{4}.
\end{align}

\subsection{Notations}
In this paper, we only focus on one space dimension. Without loss of generality, we assume $\b<1$. 
In order to get the convergence rate of the solution to the global equilibrium, we define,
\begin{align}
h = \frac{f - M}{\sqrt{M}}, \quad \s = \int_{\R} h\sM \, dv, \quad \bu = \int_{\R} h\, v\sM \, dv,
\label{def u}
\end{align}
 where $h$ is the fluctuation around the equilibrium, $\s$ is the density fluctuation, $\bu$ is the velocity fluctuation.
Then the microscopic quantity $h$ satisfies,
\begin{empheq}[left=\empheqlbrace]{align}
&\b\d\underbrace{\pt_t h}_{\rom{1}} + \b \underbrace{v\pt_xh}_{\rom{2}} -\d\underbrace{\pt_x\p\pt_vh}_{\rom{3}} +  \d\underbrace{\frac{v}{2}\pt_x\p h}_{\rom{4}} +  \d\underbrace{v\sM \pt_x\p}_{\rom{5}} = \underbrace{\mL h}_{\rom{6}},\label{with e on t_1}\\
&\pt_x^2\p = -\s,
\label{with e on t_2}
\end{empheq}
where $\mL$ is the so-called linearized Fokker-Planck operator,
\begin{align}
\mL h = \frac{1}{\sM}\mF \left(M + \sM h \right) = \frac{1}{\sM}\pt_v \( M\pt_v \left( \frac{h}{\sM} \right) \).
\end{align}
We give each term a number, in order to make it clear where the term comes from originally when doing the energy estimates later.

 We further introduce projections onto $\sM$ and $v\sM$,
\begin{align}
\Pi_1 h = \s \sM, \quad \Pi_2 h = \bu v\sM, \quad \Pi h = \Pi_1 h + \Pi_2 h.
\end{align}
These projections have the following properties:
\begin{itemize}
\item [--] $\pt_x\pt_z\Pi= \Pi\pt_x\pt_z$
\item [--] Due to the mutual orthogonality of $\Pi_1 h$, $\Pi_2h$, $(1-\Pi)h$ in $L_v^2$ space,  let $\pt^{\bk} = \pt_z^{k_1}\pt_x^{k_2}$,
\begin{align}
\ll \pt^{\bk}h \rl_{L^2_v}^2 =&  \ll \Pi_1 \pt^{\bk}h\rl_{L^2_v}^2 + \ll \Pi_2\pt^{\bk}h \rl_{L^2_v}^2 + \ll (1-\Pi)\pt^{\bk}h \rl_{L^2_v}^2 \nonumber\\
=& \ll \pt^{\bk}\s\rl_{L^2_v}^2 + \ll \pt^{\bk}\bu \rl_{L^2_v}^2 + \ll (1-\Pi)\pt^{\bk}h \rl_{L^2_v}^2,
\end{align}
which also implies,
\begin{align}
\ll \pt^{\bk}\s \rl_{L^2_v}, \ll \pt^{\bk}\bu \rl_{L^2_v}, \ll (1-\Pi)\pt^{\bk}h \rl_{L^2_v} \leq \ll \pt^{\bk}h \rl_{L^2_v}.
\label{u s h}
\end{align}
\end{itemize}
Multiplying $\sM$ and $v\sM$ to (\ref{with e on t_1}), and integrating the equation over $v$ respectively, then one has the equations for the macroscopic quantities $\s$ and $\bu$,
\begin{empheq}[left=\empheqlbrace]{align}
&\d\pt_{t} \s +  \pt_x\bu = 0,
 \label{eqn for u with e_1}\\
 &\b\d\underbrace{\pt_{t} \bu}_{\rom{1}} +  \b \underbrace{\pt_x\s}_{\rom{2}.1} + \b\underbrace{\int v^2\sM (1-\Pi)\pt_xh dv}_{\rom{2}.2}  + \d \underbrace{ \pt_x\p\s}_{\rom{3}} + \underbrace{\bu}_{\rom{6}} + \d\underbrace{\pt_x\p}_{\rom{5}}= 0.
 \label{eqn for u with e_2}
\end{empheq}
We call (\ref{with e on t_1})-(\ref{with e on t_2}) the {\it microscopic system}, and (\ref{eqn for u with e_1})-(\ref{eqn for u with e_2}) the {\it macroscopic system}.  Note (\ref{eqn for u with e_1})-(\ref{eqn for u with e_2}) are not a closed system since it contains the microscopic quantities $h$.

We also define the following norms and energies,
\begin{itemize}
\item Norms:
\begin{itemize}
\item  $\lv h \rv^2 = \int_{\R} h^2 \,dv$, \qd $\ll f \rl$ and $\la \cd, \cd\ra$ is defined in (\ref{regular norm}). 
\item $\lv h \rv_\nu^2 = \int_\R (1+|v|^2)h^2 + \(\pt_vh\)^2 \, dv$, \qd $\ll h\rl_\nu^2 = \int_{\R\times I_z} \lv h \rv_\nu d\mu(x,z),$
\item $\ll f\rl_{H_z^m}^2 = \sum_{l=0}^m\ll \pt_z^lf \rl^2$, \quad $\ll f \rl^2_{H^m_z(H^n_x)} = \sum_{i\leq n}\ll \pt^i_x f \rl^2_{H^m_z} $,
\end{itemize}
\item Energy terms:
\begin{itemize}
\item $E^{m,i}_h= \ll \pt_x^ih \rl^2_{H_z^m}, \qd E^m_h = \ll h \rl_{H^m_z(H^1_x)} = E^{m,0}_h + E^{m,1}_h,$
\item $E^{m,i}_\p= \ll \pt_x^i\pt_x\p \rl^2_{H_z^m}$, \qd $E^m_\p= \ll \pt_x\p \rl_{H^m_z(H^1_x)} = E^{m,0}_\p + E^{m,1}_\p $;
\end{itemize}
\item Dissipation terms:
\begin{itemize}
\item $D^{m,i}_h = \sum_{l \leq m}\ll\pt_z^l \pt_x^i(1-\Pi)h \rl_\nu^2$, \qd$D^m_h = D^{m,0}_h + D^{m,1}_h,$
\item $D^{m,i}_\p= \ll \pt_x^i\pt_x\p \rl^2_{H_z^m}$, \qd $D^m_\p = D^{m,0}_\p + D^{m,1}_\p $;
\item $D^{m,i}_u = \ll \pt_x^iu \rl_{H^m_z}$, \qd $D^m_u = D^{m,0}_u + D^{m,1}_u$;
\item $D^{m,i}_\s = \ll \pt_x^i\s \rl_{H^m_z}$, \qd $D^m_\s = D^{m,0}_\s + D^{m,1}_\s$;
\end{itemize}
\end{itemize}

\section{Main Results}
\label{main results}
To get the regularity of the solution in the Hilbert space, one usually uses energy estimates. In order to balance the nonlinear term $\pt_x\p \pt_vf$, and get a regularity independent of the small parameter $\b$ (or depending on $\b$ in a good way), one needs the coercivity property from the collision operator. The coercivity property one uses most commonly is
\begin{align}
{-\int_\R  h \mL h\, dv \geq C \lv (1-\Pi_1)h\rv^2,}
\end{align}
see \cite{dolbeault2015hypocoercivity, dric2009hypocoercivity}. However, this is not enough for the non-linear case. We need  stronger coercivity as listed in the following Proposition, see  \cite{duan2010kinetic} for deterministic case. {Here we extend the coercivity into random space.} 
\begin{proposition}
\label{property of L}
For $\mL$ defined in (\ref{define L}),
\begin{itemize}
\item[(a)] $-\la \mL h, h \ra = -\la \mL(1-\Pi)h , (1-\Pi)h \ra + \ll \bu \rl^2$;
\item[(b)] $-\la \mL(1-\Pi)h , (1-\Pi)h \ra  = \ll \pt_v(1-\Pi)h \rl^2 + \frac{1}{4}\ll v(1-\Pi)h\rl^2 - \frac{1}{2}\ll (1-\Pi)h \rl^2$;
\item[(c)] $-\la \mL(1-\Pi)h , (1-\Pi)h \ra \geq \ll (1-\Pi)h \rl^2$;
\item[(d)] There exists a constant $\l_0>0$, such that the following hypocoercivity holds,
\begin{align}
-\la \mL h , h\ra \geq & \l_0\ll (1-\Pi)h \rl^2_\nu +\ll \bu \rl^2,
\label{hypo}
\end{align}
and  the  largest $\l_0 = \frac{1}{7}$ in one dimension.
\end{itemize}
\end{proposition}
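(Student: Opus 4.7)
My plan is to reduce all four statements to a single master identity. After integrating by parts in $v$,
$$-\la \mL h, h\ra = \ll \sM\,\pt_v(h/\sM)\rl^2,$$
where the norm is the $L^2(d\mu)$ norm and the random variable $z$ acts only as a passive parameter throughout. Everything then follows by algebraic manipulation of this one quadratic form under the orthogonal decomposition $h = \s\sM + \bu v\sM + (1-\Pi)h$.

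For (a), I would compute $\pt_v(\Pi h/\sM) = \pt_v(\s + \bu v) = \bu$, so that $\sM\,\pt_v(h/\sM) = \bu\,\sM + \sM\,\pt_v((1-\Pi)h/\sM)$. Squaring produces the diagonal sum $\ll \bu\rl^2 - \la \mL(1-\Pi)h,(1-\Pi)h\ra$ plus a cross term that, after one integration by parts in $v$, becomes $2\bu \int v\sM\,(1-\Pi)h\,dv$ and vanishes by the defining $\Pi_2$-orthogonality. For (b) I would specialize the master identity to $g=(1-\Pi)h$ and use $\sM\,\pt_v(g/\sM) = \pt_v g + \tfrac{v}{2}g$; squaring and handling the cross term by $\int vg\,\pt_v g\,dv = -\tfrac12 \int g^2\,dv$ gives the stated equality on the nose.

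For (c) I would invoke the Gaussian Poincar\'e inequality (spectral gap of the Ornstein--Uhlenbeck generator on $L^2(M\,dv)$): if $\int \tilde g\,M\,dv = 0$ then $\int (\pt_v \tilde g)^2 M\,dv \geq \int \tilde g^2 M\,dv$. Applied with $\tilde g = (1-\Pi)h/\sM$, the zero-mean condition is precisely the $\Pi_1$-orthogonality built into $(1-\Pi)h$. Rewritten in terms of $g$, this gives $\ll \pt_v g\rl^2 + \tfrac14 \ll vg\rl^2 \geq \tfrac32 \ll g\rl^2$, which when combined with (b) immediately yields the coercivity $-\la \mL g,g\ra \geq \ll g\rl^2$.

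For (d), the delicate part, I would reserve a fraction $\l_0$ from each quadratic term on the right of (b), reducing the claim (after cancelling $\ll \bu\rl^2$ on both sides) to
$$(1-\l_0)\ll \pt_v g\rl^2 + (\tfrac14 - \l_0)\ll vg\rl^2 \geq (\tfrac12 + \l_0)\ll g\rl^2,\qquad g=(1-\Pi)h.$$
Substituting the Poincar\'e bound rewritten as $\ll g\rl^2 \leq \tfrac23 \ll \pt_v g\rl^2 + \tfrac16 \ll vg\rl^2$ on the right and demanding nonnegativity of the resulting coefficients yields the two conditions $\frac{2-5\l_0}{3}\geq 0$ and $\frac{1-7\l_0}{6}\geq 0$; the binding one is the sharp $\l_0=1/7$. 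The main obstacle is this final optimization: the Poincar\'e inequality must be used \emph{exactly once} and on the right side, so that the negative $-\tfrac12\ll g\rl^2$ term from (b) is absorbed as efficiently as possible; any other allocation loses the optimal $1/7$ constant.
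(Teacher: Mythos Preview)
Your proposal is correct. For parts (a)--(c) the paper simply cites \cite{duan2010kinetic}, so your self-contained derivations via the master identity $-\la \mL h,h\ra = \ll \sM\,\pt_v(h/\sM)\rl^2$ and the Gaussian Poincar\'e inequality are fine and indeed standard.

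For part (d) your argument and the paper's are algebraically the same interpolation between (b) and (c), just packaged differently. The paper writes the convex split $-\la \mL g,g\ra = a(-\la \mL g,g\ra) + (1-a)(-\la \mL g,g\ra)$, applies (b) to the first summand and (c) to the second, and then maximizes $\min\{a,\,a/4,\,1-\tfrac{3a}{2}\}$ over $a$, obtaining $a=4/7$ and $\l_0 = 1/7$. You instead fix $\l_0$, subtract the target $\l_0\ll g\rl_\nu^2$ from (b), and then use (c) (rewritten via (b) as $\ll g\rl^2 \leq \tfrac{2}{3}\ll \pt_v g\rl^2 + \tfrac{1}{6}\ll vg\rl^2$) to absorb the residual $\ll g\rl^2$ term; the binding coefficient constraint $\tfrac{1-7\l_0}{6}\geq 0$ recovers the same $\l_0=1/7$. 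Your multiplier on the Poincar\'e inequality, $\tfrac{1}{2}+\l_0 = \tfrac{9}{14}$, corresponds exactly (after rescaling) to the paper's choice $1-a = 3/7$, and both arguments leave the identical slack term $\tfrac{3}{7}\ll \pt_v g\rl^2$ at the optimum. So there is no genuine difference in method.
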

\begin{proof}
Here we only prove $(d)$, see \cite{duan2010kinetic} for $(a), (b), (c)$.
Since
\begin{align}
&-\la \mL(1-\Pi)h, (1-\Pi)h \ra \nonumber\\
\geq &{-a\la \mL(1-\Pi)h, (1-\Pi)h \ra - (1-a)\la \mL(1-\Pi)h, (1-\Pi)h \ra}\nonumber\\
\geq& a \ll \pt_v(1-\Pi)h \rl^2 + \frac{a}{4}\ll v(1-\Pi)h\rl^2 - \frac{a}{2}\ll (1-\Pi)h \rl^2 + (1-a)\ll (1-\Pi)h \rl^2\nonumber\\
\geq& \min_{0<a<1}\{a, \frac{a}{4}, (1 - \frac{3}{2}a) \} \ll (1-\Pi)h \rl^2_\nu,
\end{align}
{for $a$ to be determined later, where the second inequality is according to (b) and (c).} Then the largest $\l_0$ one can get is when $a = \frac{4}{7}$, $\l_0 = \frac{1}{7}$. Therefore,
\begin{align}
- \la \mL h, h\ra \geq&  \l_0\ll (1-\Pi)h \rl_\nu^2 + \ll \bu \rl^2.
\end{align}

\end{proof}
{Before we go into technique details, we first summarize the main strategy of this paper here, which is mainly based on \cite{hwang2013vlasov}. We omit $\e$, $\d$ to see the main structure of energy estimates first. We want to use energy estimates to analyze the energy $E^m = E^m_h+ E^m_\p$, the goal here is to obtain a Lyapunov-type inequality like,
\begin{align}
    \pt_t{E^m} + D^m \leq \sqrt{E^m}D^m,
\end{align}
so that one can control the initial data to get an uniform regularity. However, if one only does energy estimates for (\ref{with e on t_1}) - (\ref{with e on t_2}), the dissipation from the linearized Fokker-Planck operator, $D^m_h+D^m_u$, cannot bound the nonlinear term $\sqrt{E^m_h + E^m_\p}(D^m_h + D^m_u  + D^m_\s + D^m_\p)$. So we involve the microscopic system (\ref{eqn for u with e_1}) - (\ref{eqn for u with e_2}), where the dissipation terms $D^m_\s + D^m_\p$ comes from $\rom{2}.1$ and $\rom{5}$ in (\ref{eqn for u with e_1}). Combine the microscopic and macroscopic energy estimates, one ends up with a new energy estimates,
\begin{align}
    \pt_t \hat{E}^m + D^m \leq \sqrt{\hat{E}^m}D^m,
\end{align}
where $\hat{E}^m \sim E^m$, and $D^m = D^m_h + D^m_u + D^m_\s + D^m_\p$, so the non-linearity can be fully controlled by the dissipation terms, which gives what we want. }

{With $\e$ and $\d$ involved, one needs to bound the nonlinear term more carefully, see Lemma \ref{ineqs}, which is the key difference from \cite{hwang2013vlasov}. See Remark \ref{explain diff} for the importance of these careful estimates for the nonlinear term.
}

Based on the coercivity (\ref{hypo}), we have the following two estimates for the microscopic  and macroscopic systems respectively.
\begin{lemma}
\label{lemma:est}
The solution to system (\ref{with e on t_1}) - (\ref{with e on t_2})  satisfies the following estimates, {for any $m\geq 1$,}
\begin{align}
&\frac{1}{2}\pt_{t} \left[  E^m_h+ \frac{\d}{\e} E^m_\p\right] + \frac{\l_0}{\d\e}D^m_h +\frac{1}{\d\e}D^m_\bu\nonumber\\
\leq& \frac{AC_1^2}{a\e}\sqrt{E^m_h} \left(3D^m_\bu +2D^m_h\right) + \frac{2AC_1^2}{\e}\sqrt{E^m_\p}  \left(\(4+\frac{1}{a}\)D^m_\bu +4D^m_h\right)\nonumber\\
&+\frac{aAC_1^2}{\e}\sqrt{E^m_h}D^m_\p + \frac{aAC_1^2}{\e}\sqrt{E^m_\p}D^m_\s.
\label{eqn: micro}
\end{align}
and
\begin{align}
 &\pt_{t}\left[ \sum_{l=0}^{m-1}\la  \pt_z^{l}\bu, \pt_z^{l}\pt_x\p \ra + \sum_{l=0}^m\la \pt_z^l \pt_x\bu, \pt_z^l\pt_x^2\p \ra + \frac{1}{2\e} E^m_\p \right] + \frac{ 1}{2\d}D^m_\s + \frac{1}{\e} D^m_\p  \nonumber\\
  \leq & \frac{1}{\d} D^m_\bu + \frac{1}{2\d}D^m_h  +\frac{AC_1^2}{\e}\sqrt{E^m_\p} \( 3D^m_\s + 2D^m_\p\),
    \label{eqn: macro}
  \end{align}
where
\begin{align}
{A = 2\sqrt{m+1}\binom{m}{[\frac{m}{2}]} + \sqrt{m+1}}
\label{def of A}
\end{align}
is a constant only depending on $m$ and $[m/2] $ is the smallest integer larger or equal to $\frac{m}{2}$, and $C_1$ is the Sobolev constant in one dimension defined in (\ref{def:Ck}).
\end{lemma}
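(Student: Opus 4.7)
The plan is to prove both estimates by applying the mixed derivative $\partial^{\bk}=\partial_z^{k_1}\partial_x^{k_2}$ (with $k_1\le m$, $k_2\in\{0,1\}$) to the respective systems and testing against carefully chosen quantities, while exploiting the Poisson coupling $-\partial_x^2\phi=\sigma$ together with the continuity equation \eqref{eqn for u with e_1} to derive the key identity $\partial_t\partial_x\phi=\frac{1}{\delta}u$. This identity will be used repeatedly to convert coupling terms between $h$, $u$ and $\phi$ into perfect time derivatives of $\|\partial_x\phi\|^2$, producing the $\frac{\delta}{\epsilon}E^m_\phi$ and $\frac{1}{2\epsilon}E^m_\phi$ quantities on the left of \eqref{eqn: micro} and \eqref{eqn: macro} respectively.

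For the microscopic estimate I would apply $\partial^{\bk}$ to \eqref{with e on t_1} and take the $L^2(d\mu)$ inner product with $\partial^{\bk}h$. Term \rom{1} gives $\frac{\epsilon\delta}{2}\partial_t\|\partial^{\bk}h\|^2$; Term \rom{2} vanishes after integration by parts in $x$; Term \rom{5}, rewritten as $\delta\langle\partial^{\bk}\partial_x\phi,\partial^{\bk}u\rangle$ via the definition of $u$, becomes $\frac{\delta^2}{2}\partial_t\|\partial^{\bk}\partial_x\phi\|^2$ by the Poisson identity; and Term \rom{6}, upon invoking Proposition \ref{property of L}(d) together with the fact that $\Pi$ and $\mathcal{L}$ commute with $\partial^{\bk}$, yields the dissipations $\lambda_0\|(1-\Pi)\partial^{\bk}h\|_\nu^2+\|\partial^{\bk}u\|^2$. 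Dividing by $\epsilon\delta$ and summing in $k_1,k_2$ assembles the linear portion of the left-hand side of \eqref{eqn: micro}.

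For the macroscopic estimate I would apply $\partial_z^l$ for $l\le m-1$ and $\partial_z^l\partial_x$ for $l\le m$ to \eqref{eqn for u with e_2}, then test respectively against $\partial_z^l\partial_x\phi$ and $\partial_z^l\partial_x^2\phi$ in $L^2(\pi\,dx\,dz)$. Integration by parts in $x$ on the pressure term \rom{2}.1, combined with $-\partial_x^2\phi=\sigma$, extracts the dissipation proportional to $D^m_\sigma/\delta$; the source term \rom{5} contributes $\frac{1}{\epsilon}D^m_\phi$ plus the $\frac{1}{2\epsilon}\partial_tE^m_\phi$ piece after a Young split that again uses $\partial_t\partial_x\phi=u/\delta$; Term \rom{1} paired with the same identity produces the cross time derivative $\partial_t\langle\partial_z^l u,\partial_z^l\partial_x\phi\rangle$ on the left at the price of a term controlled by $D^m_u/\delta$ on the right; and the flux \rom{2}.2 is absorbed into $\frac{1}{2\delta}D^m_h$ via Cauchy--Schwarz on the Gaussian weight $v^2\sqrt M$ followed by Young's inequality.

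The main obstacle will be bounding the nonlinear contributions $\partial^{\bk}(\partial_x\phi\,\partial_v h)$ and $\partial^{\bk}(v\partial_x\phi\,h)$ from the microscopic equation, and $\partial^{\bk}(\partial_x\phi\,\sigma)$ from the macroscopic one, uniformly in $\epsilon$ and $\delta$. Applying Leibniz in $z$ yields $\sum_{j=0}^l\binom{l}{j}\partial_z^j\partial_x\phi\cdot\partial_z^{l-j}(\cdots)$; since we are in one spatial dimension, the factor carrying fewer $x$-derivatives can be placed in $L^\infty_x$ via the Sobolev embedding $\|g\|_{L^\infty_x}\le C_1\|g\|_{H^1_x}$, which is affordable because $E^m_h$ and $E^m_\phi$ both control one extra $x$-derivative. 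This reorganizes each nonlinear contribution into the form $\sqrt{E^m_h}\,D^m$ or $\sqrt{E^m_\phi}\,D^m$---crucially not $\sqrt{D^m}\,D^m$---which is exactly what is needed to later close a smallness-of-energy Lyapunov argument. The worst binomial coefficient $\binom{m}{[m/2]}$, together with the $\sqrt{m+1}$ loss from Cauchy--Schwarz when summing over $l$, aggregates into the constant $A$ defined in \eqref{def of A}. The free parameter $a\in(0,1)$ arises from Young's inequality each time a nonlinear product must be shared between two different dissipation buckets---for instance, $\partial_z^j\partial_x\phi\cdot\partial_z^{l-j}\sigma$ being split between $D^m_\phi$ and $D^m_\sigma$---which accounts for its appearance in the coefficients on the right-hand side of \eqref{eqn: micro}. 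With these bounds in hand, the two estimates follow by collecting all contributions.
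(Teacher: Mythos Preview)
Your outline follows the same architecture as the paper's proof in Sections~4 and~5: test the microscopic equation against $\partial^{\bk}h$, convert term~\rom{5} into $\frac{\d^2}{2}\pt_t\|\pt^{\bk}\pt_x\p\|^2$ via the Poisson identity, invoke Proposition~\ref{property of L}(d) for term~\rom{6}, and for the macroscopic estimate test \eqref{eqn for u with e_2} against $\pt_z^l\pt_x\p$ and $\pt_z^l\pt_x\s$. However, your description of the nonlinear terms contains a gap that goes to the heart of the lemma.

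You write that after Leibniz in $z$, ``the factor carrying fewer $x$-derivatives can be placed in $L^\infty_x$.'' But the Leibniz rule here redistributes $z$-derivatives, not $x$-derivatives; both factors $\pt_z^j\pt_x\p$ and $\pt_z^{l-j}(\cdots)$ carry the same $x$-order they had before. The actual mechanism, encoded in Lemma~\ref{ineqs}(b)--(e), is a case split on the $z$-index: when $i<l$ one places $\pt_z^i h$ into $L^\infty_{x,z}$ via $H^1_z(H^1_x)$ (affordable since $i+1\le m$), which produces a factor $\sqrt{E^m_h}$ multiplying $D^m_\p$; when $i=l$ this would require $\|\pt_z^l h\|_{H^1_z(H^1_x)}$, which at $l=m$ is \emph{not} controlled by $E^m_h$, so one must instead place the $\p$-factor in $L^\infty$, yielding $\sqrt{E^m_\p}$ multiplying $D^m_\s$. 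This specific pairing---$\sqrt{E^m_h}$ alone in front of $D^m_\p$ and $\sqrt{E^m_\p}$ alone in front of $D^m_\s$---is exactly what \eqref{eqn: micro} asserts and what distinguishes it from the cruder bound $\sqrt{E^m_h+E^m_\p}\,(D^m_\s+D^m_\p)$ of \cite{hwang2013vlasov}; see Remark~\ref{explain diff}. Your stated criterion does not produce this separation, and without it the uniform-in-$\e$ argument of Theorem~\ref{expo decay} would not close.

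Two smaller corrections: in the macroscopic estimate the $\frac{1}{2\e}\pt_t E^m_\p$ contribution does not come from a Young split on term~\rom{5} but from the exact identity $\la\pt_z^l u,\pt_z^l\pt_x\p\ra=\frac{\d}{2}\pt_t\|\pt_z^l\pt_x\p\|^2$ applied to term~\rom{6}; and the parameter $a$ enters through Lemma~\ref{ineqs}(b),(d) when sharing $\s$ against $u$ (respectively $\pt_x\p$ against $u$ and $(1-\Pi)h$), not from splitting a product $\pt_x\p\cdot\s$ between $D^m_\p$ and $D^m_\s$.
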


If one combines the above two inequalities, the "bad terms"  on the right hand side (RHS) can be controlled by the dissipation terms on the left hand side (LHS) if the coefficients are carefully balanced. Hence, one can come to the conclusion that the solution exponentially decays to the global equilibrium.


\begin{remark}
The main difference between the energy estimates in Lemma
\ref{lemma:est}
 and the one obtained in \cite{hwang2013vlasov} is that for both micro and macro systems, we separate the microscopic energy $E^m_h$ from the macroscopic energy $E^m_\p$ for $D^m_\p$ and $D^m_\s$, which gives us more flexibility to bound the energies, especially when small parameters are involved.
\end{remark}

\begin{theorem}
For the high field regime ($\d = 1$), if
\begin{align}
E^m_h(0) + \frac{1}{\b^2}E^m_\p(0) \leq   \frac{2\l_0^3}{\(80AC_1\)^2},
\label{initial cond_1}
\end{align}
then,
\begin{align}
E^m_h(t) \leq \frac{3}{\l_0}e^{-\frac{t}{\e}}\( E^m_h(0) + \frac{1}{\e^2}E^m_\p(0)\), \qd E^m_\p(t) \leq \frac{3}{\l_0}e^{-t}\(\e^2 E^m_h(0) + E^m_\p(0)\)
\label{conclusion: HFL}
\end{align}
For the parabolic regime ($\d = \b$), if
\begin{align}
E^m_h(0) + \frac{1}{\b}E^m_\p(0) \leq\(\frac{2\l_0^3}{(80AC_1)^2}\)\frac{1}{\e},
\label{initial cond_2}
\end{align}
then,
\begin{align}
&E^m_h(t) \leq \frac{3}{\l_0}e^{-\frac{t}{\e}}\( E^m_h(0) + \frac{1}{\e}E^m_\p(0)\), \qd E^m_\p(t) \leq   \frac{3}{\l_0}e^{-t}\(\e E^m_h(0) + E^m_\p(0)\).
\label{conclusion: PR}
\end{align}
Here $A$ and $C_1$ are the same as in Lemma \ref{lemma:est}.
\label{expo decay}
\end{theorem}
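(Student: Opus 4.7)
The plan is to combine the two inequalities of Lemma \ref{lemma:est} into a single Lyapunov functional $\mathcal{L}^m(t)$ whose time derivative carries a strictly dissipative term that dominates all four dissipation quantities $D^m_h,D^m_u,D^m_\s,D^m_\p$ uniformly in $\d,\e$. Once this is achieved, the quadratic nonlinearity on the right hand side, which always carries a factor $\sqrt{E^m_h}$ or $\sqrt{E^m_\p}$, can be absorbed by the dissipation provided the Lyapunov functional stays small; a standard continuation (bootstrap) argument then converts the smallness of the initial data into a global-in-time bound and, through a Poincar\'e-type comparison, into the exponential decay rate stated in \eqref{conclusion: HFL}--\eqref{conclusion: PR}.

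Concretely I would set
\[
\mathcal{L}^m \;=\; E^m_h + \frac{\d}{\e}E^m_\p \;+\; \frac{\eta}{\e^{a}}\!\left[\sum_{l=0}^{m-1}\la \pt_z^l \bu,\pt_z^l\pt_x\p\ra + \sum_{l=0}^{m}\la \pt_z^l\pt_x\bu,\pt_z^l\pt_x^2\p\ra + \frac{1}{2\e}E^m_\p\right]
\]
with $\eta>0$ small and $a$ chosen to match the scaling of each regime: $a=1$ in the high-field case ($\d=1$), so that the extra boundary piece contributes $\tfrac{\eta}{2\e^2}E^m_\p$ and reproduces the weight $\tfrac{1}{\e^2}$ appearing in \eqref{initial cond_1}; and $a=0$ in the parabolic case ($\d=\e$), so that the boundary piece gives $\tfrac{\eta}{2\e}E^m_\p$ and matches \eqref{initial cond_2}. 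A weighted Cauchy--Schwarz on the mixed inner products shows that these cross terms are bounded by $\tfrac{1}{2}D^m_\bu + C\,\e^{-2}E^m_\p$ (respectively $\tfrac{1}{2}D^m_\bu+C\,\e^{-1}E^m_\p$), so for $\eta$ sufficiently small $\mathcal{L}^m$ is equivalent to $E^m_h+\frac{1}{\e^2}E^m_\p$ (resp.\ $E^m_h+\frac{1}{\e}E^m_\p$).

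The next step is to differentiate $\mathcal{L}^m$ using Lemma \ref{lemma:est}. Adding \eqref{eqn: micro} to $\frac{\eta}{\e^{a}}\times$\eqref{eqn: macro} produces a dissipation of the form
\[
\frac{\l_0}{\d\e}D^m_h+\frac{1}{\d\e}D^m_\bu+\frac{\eta}{2\d\e^{a}}D^m_\s+\frac{\eta}{\e^{a+1}}D^m_\p \;-\; \frac{\eta}{\d\e^{a}}\!\left(D^m_\bu+\tfrac{1}{2}D^m_h\right).
\]
Choosing $\eta$ small enough (specifically $\eta\le \min\{1,\l_0\}$ in the high-field case, $\eta\le \min\{1,\l_0\}\,\e^{a-1}$ in the parabolic case; in both cases this is compatible with $\e<1$) the negative correction from the macroscopic inequality is absorbed by the $D^m_h$ and $D^m_\bu$ terms, so one ends up with
\[
\tfrac{1}{2}\pt_t\mathcal{L}^m + c_0\,\mathcal{D}^m \;\le\; C\sqrt{\mathcal{L}^m}\,\mathcal{D}^m,\qquad \mathcal{D}^m:=\tfrac{1}{\d\e}\bigl(D^m_h+D^m_\bu+D^m_\s\bigr)+\tfrac{1}{\e^{a+1}}D^m_\p,
\]
with $c_0>0$ and $C\sim A C_1^2$ independent of $\d,\e$. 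The key point is that every nonlinear contribution in \eqref{eqn: micro}--\eqref{eqn: macro} contains one of the dissipation quantities on the right, so the bound is quadratic in the square root of the energy.

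Finally I would run a bootstrap: suppose $\sqrt{\mathcal{L}^m(t)}\le c_0/(2C)$ on $[0,T]$; then the nonlinear term is absorbed by half the dissipation, giving $\pt_t\mathcal{L}^m + c_0 \mathcal{D}^m\le 0$. Because of the orthogonal decomposition $h=\s\sM+\bu v\sM+(1-\Pi)h$ and the definition of $E^m_\p,D^m_\p$, one has the elementary comparison $\mathcal{L}^m \le C' \e\,\mathcal{D}^m$ in the high-field case and $\mathcal{L}^m\le C'\e^2\mathcal{D}^m$ in the parabolic case, which yields $\pt_t\mathcal{L}^m + \tfrac{1}{\e}\mathcal{L}^m\le 0$, hence $\mathcal{L}^m(t)\le \mathcal{L}^m(0)e^{-t/\e}$. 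Choosing the initial constant $\frac{2\l_0^3}{(80AC_1)^2}$ exactly as in \eqref{initial cond_1}--\eqref{initial cond_2} ensures $\sqrt{\mathcal{L}^m(0)}< c_0/(2C)$ with strict inequality, so the bootstrap closes and the estimate holds for all $t\ge 0$; undoing the equivalence between $\mathcal{L}^m$ and $E^m_h+\e^{-(a+1)}E^m_\p$ then gives \eqref{conclusion: HFL}--\eqref{conclusion: PR} with the prefactor $3/\l_0$. The main obstacle is the bookkeeping in the third step: the coefficients in front of every dissipation quantity must be balanced simultaneously and in a way that does not deteriorate as $\e\to 0$, which is precisely why the micro/macro splitting of $E^m_h$ and $E^m_\p$ emphasized in the paper's Remark is essential.
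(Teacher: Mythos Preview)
Your overall strategy---combining \eqref{eqn: micro} with a scaled multiple of \eqref{eqn: macro} into a single Lyapunov functional, proving its equivalence to a weighted energy, and closing by a continuity (bootstrap) argument---is exactly what the paper does, and your choice of the scaling exponent $a$ matches the paper's.

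The genuine gap is in the last step, where you claim the ``elementary comparison'' $\mathcal L^m\le C'\e\,\mathcal D^m$ (high-field) and $\mathcal L^m\le C'\e^2\,\mathcal D^m$ (parabolic) with $C'$ independent of $\e$. This is false. In the high-field case $\mathcal L^m\sim E^m_h+\e^{-2}E^m_\p$, while your $\mathcal D^m=\e^{-1}(D^m_h+D^m_u+D^m_\s)+\e^{-2}D^m_\p$; since $D^m_\p=E^m_\p$, the $E^m_\p$-piece carries the \emph{same} weight $\e^{-2}$ on both sides, so no extra factor of $\e$ can be extracted. (In the parabolic case your formula for $\mathcal D^m$ is itself inaccurate: the $D^m_\s$ contribution comes only from \eqref{eqn: macro} and therefore carries weight $\eta/(2\d)=O(\e^{-1})$, not $O(\e^{-2})$; but the same obstruction with $D^m_\p$ applies.) Consequently a direct Gr\"onwall on $\mathcal L^m$ yields only $\mathcal L^m(t)\le \mathcal L^m(0)e^{-ct}$ with $c=O(1)$, which recovers the $e^{-t}$ bound on $E^m_\p$ but \emph{not} the fast rate $e^{-t/\e}$ for $E^m_h$ stated in \eqref{conclusion: HFL}--\eqref{conclusion: PR}.

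The missing idea, which is what the paper does, is to separate the two components \emph{after} integrating in time. From $\tfrac12\pt_t\hat E^m+c\bigl(D^m_h+D^m_u+D^m_\s+\e^{-1}D^m_\p\bigr)\le 0$ (high-field, post-bootstrap), integrate, use $\hat E^m\sim \e E^m_h+\e^{-1}E^m_\p$, $D^m_h+D^m_u+D^m_\s\ge E^m_h$, $D^m_\p=E^m_\p$, and then drop the nonnegative $E^m_\p$ pieces from both sides to obtain
\[
\e\,E^m_h(t)\ \le\ C\bigl(\e E^m_h(0)+\e^{-1}E^m_\p(0)\bigr)\;-\;c\!\int_0^t E^m_h(s)\,ds.
\]
The factor-of-$\e$ mismatch between the left side and the integrand now yields, via the integral form of Gr\"onwall, the rate $e^{-t/\e}$ for $E^m_h$ alone; the $e^{-t}$ bound for $E^m_\p$ follows symmetrically. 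Without this componentwise extraction the two distinct rates in the theorem cannot be obtained.
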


\begin{remark}
Basically, Theorem \ref{expo decay} implies the following,
\begin{itemize}
\item[(a)] For the High Field regime, as long as initially the electric field $\pt_x\p$ is $O(\e)$ small, and the initial data $f$ is suitably bounded by (\ref{initial cond_1}), then the solution will converge to the global equilibrium exponentially, {\it uniformly in} $\epsilon$. 
\item[(b)] For the Parabolic regime, the initial condition on both $f$ and $\pt_x\p$ are independent of $\e$. Furthermore, when $\e$ become smaller, $f$ don't need to be near Maxwellian any more for the solution to converge to the global equilibrium exponentially.
\item[(c)]  If one directly applies the conclusion of \cite{hwang2013vlasov}, then for the high field regime, $E_h^m(0)$ and $E_\phi^m(0)$ need to be $O(\b)$ and $O(\b^3)$ initially, while for the parabolic regime, $E_h^m(0)$ and $E_\phi^m(0)$ need to be $O(1)$ and $O(\b)$ initially, see Remark \ref{explain diff} for details. Our result allows more general initial data for $f$ while keeping the optimal convergence rate at the same time, which is because of the new energy estimates we obtained in Lemma \ref{lemma:est}.
\item [(d)] {One notices that, the initial condition on  the electric field $\pt_x\p$ for the high field regime requires to be $O(\sqrt{\e})$, this is necessary because the limiting hyperbolic system won't preserve the regularity at a later time if the electric field doesn't vanishes. On the other hand, for the parabolic regime, the condition on the electric field is $O(1)$, which is because when $\e\to 0$, the VPFP system goes to a parabolic equation which enjoys better regularity compared to the high field regime. }
\item [(e)] {Notice here, although $\ll \s(t) \rl_{H^m_z}$ decays in time, the mass is still conserved, that is, $\int_\R \s(t) dx =  \int_\R \s(0) dx$ holds for all $t>0$. It is an interesting question to study the case when this conservation is not true for future research.
\item [(f)] Since $\displaystyle f = M+\sM h$ and $M$ is the global Maxwellian without randomness, so the regularity of the perturbative solution $h$ in random space implies the uniform regularity of the solution $f$. More specifically, one knows the regularity of the initial data in the random space is preserved in time. Furthermore, the bound is independent of the small parameter $\e$. }
\end{itemize}
\end{remark}

One notices that the initial condition has a bad dependency on $m$. Actually this can be eliminated by defining a new energy norm. Since the main focus of this paper is uniform regularity in $\e$, so we just give a brief proof of the following Theorem in Appendix.

\begin{theorem}
\label{new energy} 
{Define 
\begin{align}
&\ll \pt_z^lh \rl_l^2 = \ll  \frac{l+1}{l!}\pt_z^lh \rl^2\\
&\t{E}^{m,i}_h= \sum_{l\leq m}\ll \pt_x^i\pt_z^lh \rl^2_l, \qd \t{E}^m_h = \sum_{i\leq 1}\sum_{l\leq m}\ll \pt_x^i\pt_z^lh \rl^2_l = \t{E}^{m,0}_h + \t{E}^{m,1}_h,\\
&\t{E}^{m,i}_\p= \sum_{l\leq m} \ll \pt_x^i\pt_z^l\pt_x\p \rl^2_l, \qd \t{E}^m_\p=\sum_{i\leq 1}\sum_{l\leq m} \ll \pt_x^i\pt_z^l\pt_x\p \rl_l^2 = \t{E}^{m,0}_\p + \t{E}^{m,1}_\p ;
\end{align}
Theorem \ref{expo decay} still holds for the new energy norms with $A = 8\sqrt{\sum_{i=0}^\infty\frac{1}{(i+1)^2}}$.}
\end{theorem}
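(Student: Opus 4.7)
The new norm attaches the weight $w_l := (l+1)/l!$ to the $l$-th $z$-derivative, so $\t{E}^m_h = \sum_{l\le m}\|w_l\,\partial_z^l h\|^2$ and similarly for $\phi$. My plan is to rerun the proof of Theorem \ref{expo decay} essentially unchanged, only replacing the weighted energy estimates of Lemma \ref{lemma:est} by analogous ones whose ``constant'' $A$ no longer depends on $m$. The whole improvement comes from re-examining the single place where $m$ enters in Lemma \ref{lemma:est}: the control of the $z$-derivatives of nonlinear products such as $\partial_x\phi\,\partial_v h$ or $\partial_x\phi\,\sigma$ via the Sobolev inequality $\|fg\|_{L^2_x}\le C_1\|f\|_{H^1_x}\|g\|_{L^2_x}$ after Leibniz expansion.

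The key algebraic identity is
\begin{equation*}
\binom{m}{l}\,w_m \;=\; \frac{m!}{l!(m-l)!}\cdot\frac{m+1}{m!} \;=\; \frac{m+1}{(l+1)(m-l+1)}\,w_l\,w_{m-l}
\;\le\; \Bigl(\tfrac{1}{l+1}+\tfrac{1}{m-l+1}\Bigr)\,w_l\,w_{m-l},
\end{equation*}
since $(l+1)+(m-l+1)=m+2\ge m+1$. Applying this to the Leibniz expansion $\partial_z^m(fg)=\sum_l\binom{m}{l}(\partial_z^l f)(\partial_z^{m-l}g)$ followed by the Sobolev inequality, I get
\begin{equation*}
\|w_m\partial_z^m(fg)\|_{L^2_x}\;\le\; C_1\sum_{l=0}^m\Bigl(\tfrac{1}{l+1}+\tfrac{1}{m-l+1}\Bigr)\|w_l\partial_z^l f\|_{H^1_x}\,\|w_{m-l}\partial_z^{m-l}g\|_{L^2_x}.
\end{equation*}
Now the Cauchy--Schwarz step, together with the pointwise bound $\|w_l\partial_z^l f\|_{H^1_x}^2\le \t{E}^m_f$ (so that the $L^\infty_l$ norm of the Sobolev factor is absorbed), yields
\begin{equation*}
\|w_m\partial_z^m(fg)\|_{L^2_x}\;\le\; 2C_1\sqrt{\textstyle\sum_{l=0}^{\infty}\tfrac{1}{(l+1)^2}}\,\sqrt{\t{E}^m_f}\,\|g\|_{\widetilde{H}^m_z(L^2_x)}.
\end{equation*}
This is exactly the role played by $A=2\sqrt{m+1}\binom{m}{[m/2]}+\sqrt{m+1}$ in the original Lemma \ref{lemma:est}, but now with the $m$-free constant $2\sqrt{\sum_l (l+1)^{-2}}$ in its place.

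With this substitution, every nonlinear product estimate inside the proof of Lemma \ref{lemma:est} (there are four types, coming from the terms $\textnormal{III}$, $\textnormal{IV}$, $\textnormal{V}$ in \eqref{with e on t_1} and $\textnormal{III}$ in \eqref{eqn for u with e_2}) can be redone verbatim in the weighted norm, producing inequalities of exactly the same shape as \eqref{eqn: micro}--\eqref{eqn: macro} with $\t{E}$ replacing $E$ and with the new $A$. Since each of the four nonlinear terms contributes one such factor, collecting all of them gives the stated $A=8\sqrt{\sum_{i=0}^{\infty}(i+1)^{-2}}$. The mechanism by which the dissipation absorbs the nonlinearity, the smallness condition on the initial data, and the exponential decay conclusion in both the high-field and parabolic regimes are then obtained by repeating the algebra of Theorem \ref{expo decay} word-for-word with $E$ replaced by $\t{E}$ and the updated $A$.

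The only genuinely delicate point is verifying that the Leibniz--Cauchy--Schwarz recombination interacts correctly with the \emph{two} spatial derivatives in $\t{E}^m_h = \t{E}^{m,0}_h + \t{E}^{m,1}_h$: applying $\partial_z^l\partial_x^i$ to a product requires the Sobolev factor to still control $\|w_l\partial_z^l f\|_{H^1_x}$ uniformly in $l$, and one has to be careful to assign the $H^1_x$ slot consistently so that the right-hand side falls inside $\sqrt{\t{E}^m}\cdot D^m$ rather than $\sqrt{\t{E}^m}\cdot \t{E}^m$. Once this bookkeeping is carried out, the proof reduces to the already-established machinery and is deferred to the appendix as indicated.
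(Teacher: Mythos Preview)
Your approach is essentially the same as the paper's: both hinge on the identity $\binom{l}{i}w_l=\frac{l+1}{(i+1)(l-i+1)}w_iw_{l-i}\le\bigl(\tfrac{1}{i+1}+\tfrac{1}{l-i+1}\bigr)w_iw_{l-i}$ followed by Cauchy--Schwarz against the summable sequence $(i+1)^{-2}$, which replaces the $m$-dependent constant $A$ of Lemma~\ref{lemma:est} by $A'=\sqrt{\sum_{i\ge0}(i+1)^{-2}}$. The paper carries this out term-by-term directly on the trilinear forms (via Lemma~\ref{ineqs} (b),(d),(e)) rather than through an abstract bilinear product estimate, and your displayed inequality $\|w_m\partial_z^m(fg)\|_{L^2_x}\le 2C_1A'\sqrt{\t E^m_f}\,\|g\|_{\widetilde{H}^m_z(L^2_x)}$ is dimensionally inconsistent as written (pointwise in $z$ on the left, integrated in $z$ on the right), but once you pair the product with the third factor and integrate in $z$ as in the actual energy estimate this is exactly the paper's computation.
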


\begin{proof}
See Appendix \ref{proof of new energy}
\end{proof}

The proof of the main Theorem requires some equalities and inequalities, which are given below.\begin{lemma}
\label{ineqs}
Let $ \pt^{\bk} = \pt^{k_1}_z\pt^{k_2}_x$, and similar for $\pt^{\bi}, \pt^{\bl}$,
\begin{itemize}
\item [(a)] $\displaystyle \la \pt^{\bk}\pt_x\p, v\sM\pt^{\bk}h \ra = \frac{\d}{2}\pt_t\ll \pt^{\bk}\pt_x\p \rl^2$,
\item [(b)] \quad$\displaystyle \la \pt^{\bk} \pt_x\p\, \pt_v(\pt^{\bi}h), \pt^{\bl}h \ra - \frac{1}{2} \la v\pt^{\bk} \pt_x\p\, \pt^{\bi} h,  \pt^{\bl}h \ra$\\
$\displaystyle  \leq  C_1\ll \pt^{\bk}\pt_x\p \rl_{H^1_z(H^1_x)} \left(a\ll\pt^{\bi}\s \rl^2 +  2\ll \pt^{\bi} \bu\rl^2  + 2\ll (1-\Pi)\pt^{\bi}h \rl^2_\nu + \(2 + \frac{1}{a}\)\ll \pt^{\bl}\bu \rl^2+2\ll (1-\Pi) \pt^{\bl}h \rl^2_\nu \right)$,
{\item [(c)]$\displaystyle \la \pt^{\bk} \pt_x\p\, \pt_v(\pt^{\bi}h), \pt^{\bl}h \ra - \frac{1}{2} \la v\pt^{\bk} \pt_x\p\, \pt^{\bi} h,  \pt^{\bl}h \ra$\\
 $\displaystyle \leq  C_1^2  \sqrt{\ll \pt^{\bk}\pt_x\p \rl^2 + \ll \pt^{\bk}\pt_x^2\p \rl^2} \left(a\sum_{i\leq 1}\ll\pt^{\bi}\pt_z^i \s \rl^2 +  2\sum_{i\leq 1}\ll \pt^{\bi}\pt_z^i \bu\rl^2 + 2\sum_{i\leq 1}\ll (1-\Pi)\pt^{\bi}\pt_z^ih \rl^2_\nu\right.$\\
 $\displaystyle \left. + \(2+\frac{1}{a}\)\ll \pt^{\bl}\bu \rl^2 +2\ll (1-\Pi) \pt^{\bl}h \rl^2_\nu \right)$}
\item [(d)] $\displaystyle \la \pt^{\bk}\pt_x\p \, \pt_v(\pt^{\bi}h), \pt^{\bl}h \ra - \frac{1}{2} \la v\pt^{\bk}\pt_x\p \pt^{\bi}h, \pt^{\bl}h \ra$\\
$\displaystyle  \leq  C_1\ll \pt^{\bi} h\rl_{H^1_z(H^1_x)} \left(  \frac{3}{a}\ll\pt^{\bl} \bu  \rl^2 + \frac{2}{a}\ll (1-\Pi)\pt^{\bl}h\rl^2_\nu+ a\ll\pt^{\bk}\pt_x\p\rl^2 \right)$,
{\item [(e)] $\displaystyle\la \pt^{\bk} \pt_x\p\, \pt_v(\pt^{\bi}h), \pt^{\bl}h \ra - \frac{1}{2} \la v\pt^{\bk} \pt_x\p\, \pt^{\bi} h,  \pt^{\bl}h \ra$\\
$\displaystyle\leq  C_1^2\sqrt{ \ll \pt^{\bi} h\rl^2 + \ll \pt^{\bi} \pt_xh\rl^2} \(a \sum_{i\leq 1}\ll\pt^{\bk}\pt_z^i\pt_x\p\rl^2 +  \frac{3}{a}\ll\pt^{\bl} \bu  \rl^2 + \frac{2}{a}\ll (1-\Pi)\pt^{\bl}h\rl^2_\nu\).$}
\item [(f)] $\ll \pt^{\bk}\pt_x\pt_t\p\rl^2 \leq \frac{1}{\d^2} \ll \pt^{\bk}\bu \rl^2.$
\end{itemize}
where $a$ can be any positive constant.
\end{lemma}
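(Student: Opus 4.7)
The six items naturally split into two identity statements (a) and (f), and four trilinear bounds (b)--(e). For all of them, the main tools are the projection decomposition $\pt^{\bi}h = \pt^{\bi}\s\,\sM + \pt^{\bi}\bu\,v\sM + (1-\Pi)\pt^{\bi}h$, the moment identities for $M$ listed at the end of Section~\ref{models} (in particular $\int M\,dv=\int v^2 M\,dv=1$, $\int v^4 M\,dv=3$, and the vanishing of all odd moments), integration by parts in $x$, the Poisson equation $\pt_x^2\p=-\s$ coupled with the conservation law (\ref{eqn for u with e_1}), and the one-dimensional Sobolev embedding $\ll g\rl_{L^\infty}\leq C_1\ll g\rl_{H^1}$.

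For (a), the $v$-integral collapses to $\pt^{\bk}\bu$ since $\int v\sM\pt^{\bk}h\,dv = \pt^{\bk}\bu$. Two integrations by parts in $x$ reduce $\la \pt^{\bk}\pt_x\p, v\sM\pt^{\bk}h\ra$ to $-\la \pt^{\bk}\p, \pt^{\bk}\pt_x\bu\ra$; substituting $\pt_x\bu = -\d\,\pt_t\s = \d\,\pt_t\pt_x^2\p$ gives the claimed $\frac{\d}{2}\pt_t\ll\pt^{\bk}\pt_x\p\rl^2$. Part (f) is the same identity in disguise: differentiating $\pt_x^2\p=-\s$ in $t$ and using the conservation law yields $\pt_x\pt_t\p = \bu/\d$ (up to a constant killed by decay at infinity), hence $\ll\pt^{\bk}\pt_x\pt_t\p\rl^2 = \ll\pt^{\bk}\bu\rl^2/\d^2$.

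For the trilinear bounds (b)--(e), I would combine the two terms on the left into the single form $\la \pt^{\bk}\pt_x\p\,[\pt_v\pt^{\bi}h - \tfrac{v}{2}\pt^{\bi}h],\pt^{\bl}h\ra$ and expand the operator $\pt_v - \tfrac{v}{2}$ on $\pt^{\bi}h$ via the projection, which after a short computation using $\pt_v\sM=-\tfrac{v}{2}\sM$ gives
\begin{equation*}
\pt_v(\pt^{\bi}h) - \tfrac{v}{2}\pt^{\bi}h = -v\,\pt^{\bi}\s\,\sM + (1-v^2)\pt^{\bi}\bu\,\sM + \bigl[\pt_v(1-\Pi)\pt^{\bi}h - \tfrac{v}{2}(1-\Pi)\pt^{\bi}h\bigr].
\end{equation*}
Pairing each piece against $\pt^{\bl}h$ (itself decomposed via $\Pi$) and exploiting the vanishing moments $\int vM\,dv=\int(1-v^2)M\,dv=\int v(1-v^2)M\,dv=0$, every macroscopic--macroscopic cross term disappears. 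What remains is: a term $-\pt^{\bi}\s\cdot\pt^{\bl}\bu$ from the first piece (via $\int v^2 M\,dv=1$); a pairing of $\pt^{\bi}\bu$ against $(1-\Pi)\pt^{\bl}h$ from the second, bounded using $\int(1-v^2)^2 M\,dv=2$ together with Cauchy--Schwarz in $v$; and a purely microscopic contribution from the third, controlled by the $\nu$-weighted norm since $\lv\cdot\rv_\nu$ dominates both $\pt_v$ and $v$-weighted pieces. Young's inequality with a free parameter $a>0$ splits the resulting products and yields the coefficients $a$, $1/a$, $2$, $3/a$ as stated.

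The four versions (b)--(e) differ only in how the three factors are placed in $L^\infty$ versus $L^2$ after this common reduction. In (b), $\pt^{\bk}\pt_x\p$ is pulled out in $L^\infty_x$ via one Sobolev embedding (one $C_1$), and a Cauchy--Schwarz in $z$ handles the remaining $z$-integration, which is why the $H^1_z(H^1_x)$ norm appears; (c) uses a further Sobolev embedding in $z$ applied to one of the $h$-factors, adding the second $C_1$ and the $\sum_{i\leq 1}\pt_z^i$ sums on the right. Items (d) and (e) swap the roles of $\pt^{\bk}\pt_x\p$ and $\pt^{\bi}h$: it is $\pt^{\bi}h$ that now sits in $L^\infty_x$. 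The main obstacle is not conceptual but careful bookkeeping: after Young's inequality, one must verify that every remaining term lands exactly in one of $\ll\pt^{\bi}\s\rl^2$, $\ll\pt^{\bi}\bu\rl^2$, $\ll(1-\Pi)\pt^{\bi}h\rl_\nu^2$, $\ll\pt^{\bl}\bu\rl^2$, $\ll(1-\Pi)\pt^{\bl}h\rl_\nu^2$ with exactly the prescribed weights.
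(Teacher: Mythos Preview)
Your proof of (a) and (f) is exactly the paper's. Your treatment of the trilinear bounds is organized differently: you combine the two terms into $\la \pt^{\bk}\pt_x\p\,(\pt_v-\tfrac{v}{2})\pt^{\bi}h,\pt^{\bl}h\ra$ and expand the operator $(\pt_v-\tfrac{v}{2})$ on $\pt^{\bi}h$ once and for all via $\Pi$, whereas the paper treats the two pieces III and IV separately and, more importantly, uses \emph{different} decompositions for (b) versus (d). For (b) the paper decomposes $\pt^{\bi}h$ (as you do); for (d) it decomposes $\pt^{\bl}h$ instead and keeps $\pt^{\bi}h$ intact, so that the full $L^2_v$-norm $\lv\pt^{\bi}h\rv$ can be pulled out in $L^\infty_{x,z}$ directly. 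Your unified reduction is cleaner, and for (b)/(c) it gives the same bounds with less bookkeeping.

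There is one point where your sketch is too quick. In (d)/(e) you say one simply ``swaps the roles'' and places $\pt^{\bi}h$ in $L^\infty_{x,z}$. But in your decomposition the micro--micro contribution is
\[
\big\langle \pt^{\bk}\pt_x\p\,\big[\pt_v(1-\Pi)\pt^{\bi}h-\tfrac{v}{2}(1-\Pi)\pt^{\bi}h\big],\,(1-\Pi)\pt^{\bl}h\big\rangle,
\]
and if you naively put the bracketed factor in $L^\infty_{x,z}$ you need $\ll\,\lv\pt_v(1-\Pi)\pt^{\bi}h\rv\,\rl_{L^\infty_{x,z}}$ and $\ll\,\lv v(1-\Pi)\pt^{\bi}h\rv\,\rl_{L^\infty_{x,z}}$, i.e.\ the $\nu$-norm of $(1-\Pi)\pt^{\bi}h$ in $L^\infty$. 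That quantity is \emph{not} controlled by $\ll\pt^{\bi}h\rl_{H^1_z(H^1_x)}$. The fix is easy: integrate the $\pt_v$-piece by parts in $v$ and, for the $v$-weighted piece, place the factor $v$ on $(1-\Pi)\pt^{\bl}h$ via Cauchy--Schwarz. Then only $\lv(1-\Pi)\pt^{\bi}h\rv\leq\lv\pt^{\bi}h\rv$ remains on the $\bi$-side, and the $\nu$-norm lands on $(1-\Pi)\pt^{\bl}h$ as required by the statement. The paper sidesteps this entirely by never decomposing $\pt^{\bi}h$ in (d); your route works too, but the swap is not symmetric at the level of your unified formula and this extra step should be made explicit.
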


\begin{proof}
See Appendix \ref{proof of ineqs}.
\end{proof}
\begin{remark}
Notice that in the inequalities $(b)$ and $(c)$, the dissipations of $u$ and $(1-\Pi)h$ are related to both energies $h$ and $\pt_x\p$.  However, the dissipation of $\s$ is only related to the energy of $\pt_x\p$ through $(b)$, while the dissipation of $\pt_x\p$ is only related to the energy of $h$ through $(c)$. This is why we can get the separation of the micro and macro energies in Lemma \ref{lemma:est} for $D^m_\s$ and $D^m_\p$.
\end{remark}

\section{Energy Estimates on the Microscopic Equations}
\label{sec:micro}
Now we prove the first part of Lemma \ref{lemma:est}, (\ref{eqn: micro}).
\subsection{Energy estimates for $\ll \pt_z^l h \rl^2$ }
Taking $\pt_z^l$ to (\ref{with e on t_1}), and multiplying by $\pt_z^l h$, then integrating it over $\mu(x,v,z)$, one has,
\begin{align}
& \frac{\b\d}{2}\pt_{t}\ll \pt_z^lh \rl^2 + \d\underbrace{\la \pt_z^l\pt_x\p, v\sM\pt_z^lh \ra}_{\rom{5}}  \underbrace{- \la \mL\pt_z^lh, \pt_z^lh \ra}_{\rom{6}}\nonumber\\
=&  \d \sum_{i = 0}^{l} (^l_i)\left( \underbrace{\la \pt_z^{l-i}\pt_x\p \pt_v \pt_z^i h, \pt_z^lh \ra}_{\rom{3}} -\underbrace{\frac{1}{2}\la v\pt_z^{l-i}\pt_x \p\, \pt_z^ih, \pt_z^lh\ra}_{\rom{4}} \right).
\end{align}
$\rom{5}$ and $\rom{6}$ are "good terms", since by Lemma \ref{ineqs} (a) and Proposition \ref{property of L} (d),
\begin{align}
\rom{5} = \frac{\d}{2}\pt_t\ll \pt_z^l \pt_x\p \rl^2, \quad \rom{6} \geq  \l_0\ll (1-\Pi)\pt_z^lh \rl^2_\nu +\ll \pt_z^l\bu \rl^2.
\label{good terms}
\end{align}
However, $\rom{3}$ and $\rom{4}$ are "bad terms" here, and one wants to control it by the dissapations.\\
For $i < l$, by Lemma \ref{ineqs} (c),
\begin{align}
\rom{3} - \rom{4} \leq &C_1 \ll \pt_z^ih \rl_{H^1_z(H^1_x)} \left( \frac{3}{a}\ll \pt_z^l \bu \rl^2 + \frac{2}{a}\ll (1-\Pi)\pt_z^lh \rl^2_\nu+ a\ll \pt_z^{l-i}\pt_x\p \rl^2 \right).
\end{align}
For $i = l$, by Lemma \ref{ineqs} (b),
\begin{align}
\rom{3} - \rom{4} \leq &aC_1 \ll \pt_x\p \rl_{H^1_z(H^1_x)} \ll\pt_z^l\s \rl^2 + C_1 \ll \pt_x\p \rl_{H^1_z(H^1_x)} \left(\(4+\frac{1}{a}\)\ll \pt_z^l\bu \rl^2 + 4\ll (1-\Pi) \pt_z^lh \rl^2_\nu\right).
\end{align}
Here if one treats the case of $i = l$ the same as the case of $i < l$, then the largest $i = m$ leads to $\ll \pt_z^mh\rl_{H^1_z(H^1_x)}$, which cannot be controlled by $\pt_tE^m_h$, so we treat $i = l$ differently from $i < l$. Therefore one has the energy estimate,
\begin{align}
& \frac{\d}{2}\pt_{t}\left(\b\ll \pt_z^lh \rl^2 + \d\ll \pt_z^l\pt_x\p \rl^2 \right) + \l_0 \ll (1-\Pi)\pt_z^lh \rl^2_\nu + \ll \pt_z^l\bu \rl^2\nonumber\\
\leq& C_1\d\sum_{l\neq 1, i = 0}^{l-1}(^l_i)\ll\pt_z^ih \rl_{H^1_z(H^1_x)}(\frac{3}{a}\ll \pt_z^l\bu \rl^2+ \frac{2}{a}\ll (1-\Pi)\pt_z^lh \rl^2_\nu +a\ll \pt_z^{l-i}\pt_x\p\rl^2)\nonumber\\
&+aC_1\d\sqrt{E^1_\p}\, \ll \pt_z^l\s\rl^2 +C_1\d\sqrt{E^1_\p} \(\(4+\frac{1}{a}\) \ll \pt_z^l\bu\rl^2 + 4\ll (1-\Pi)\pt_z^lh\rl^2_\nu\).
\end{align}
Summing $l$ from $0$ to $m$, one gets,
\begin{align}
&\frac{\d}{2}\pt_{t} \left[ \b E^{m,0}_h + \d E^{m,0}_\p\right] + \l_0D^{m,0}_h + D^{m,0}_u  \nonumber\\
\leq&C_1\d\sum_{l=1}^m\sum_{i = 0}^{l-1}(^l_i)\ll\pt_z^ih \rl_{H^1_z(H^1_x)}\left(\frac{3}{a} \ll \pt_z^l\bu \rl^2+ \frac{2}{a}\ll (1-\Pi)\pt_z^lh \rl^2_\nu\right) +aC_1\d\sum_{l=1}^m\sum_{i = 1}^{m}(^l_i)\ll\pt_z^{l-i}h \rl_{H^1_z(H^1_x)} \ll \pt_z^i\pt_x\p\rl^2 \nonumber\\
 &+aC_1\d\sqrt{E^1_\p} D^{m,0}_\s +C_1\d\sqrt{E^1_\p}\left(\(4+\frac{1}{a}\)D^{m,0}_u + 4D^{m,0}_h\right)  \nonumber\\
  =&C_1\d\sum_{l=1}^m\left( \sum_{i = 0}^{l-1}(^l_i)\ll\pt_z^ih \rl_{H^1_z(H^1_x)}\right)\left(\frac{3}{a}\ll \pt_z^l\bu \rl^2+  \frac{2}{a}\ll (1-\Pi)\pt_z^lh \rl^2_\nu \right) \nonumber\\
&+ aC_1\d\sum_{i = 1}^{m}\left( \sum_{l=i}^m(^l_i)\ll\pt_z^{l-i}h \rl_{H^1_z(H^1_x)}\right)\ll \pt_z^{i}\pt_x\p\rl^2\nonumber\\
  &+aC_1\d\sqrt{E^1_\p}D^{m,0}_\s  + C_1\d\sqrt{E^1_\p}\left(\(4+\frac{1}{a}\)D^{m,0}_u + 4D^{m,0}_h\right) \nonumber\\
\leq&BC_1\d\sqrt{E^m_h} \left(\frac{3}{a}D^{m,0}_u+  \frac{2}{a}D^{m,0}_h\right)+ aBC_1 \d  \sqrt{E^m_h} D^{m,0}_\p+ aC_1\d\sqrt{E^1_\p} D^{m,0}_\s \nonumber\\
&+ C_1\d\sqrt{E^1_\p}\left(\(4+\frac{1}{a}\)D^{m,0}_u+ 4D^{m,0}_h \right)\nonumber\\
\leq&\frac{BC_1\d}{a}\sqrt{E^m_h} \left(3D^{m,0}_u+  2D^{m,0}_h\right) + C_1\d\sqrt{E^1_\p}\left(\(4+\frac{1}{a}\)D^{m,0}_u+ 4D^{m,0}_h \right)\nonumber\\
& + aBC_1 \d  \sqrt{E^m_h}  D^{m,0}_\p+ aC_1\d\sqrt{E^1_\p}D^{m,0}_\s
\label{h}
\end{align}
where $B = 2\sqrt{m+1}\binom{m}{[\frac{m}{2}]}$, $[\frac{m}{2}]$ represent the smallest integer larger than $\frac{m}{2}$.

Before we move on to other estimates, let us first summarize what else we need. The goal of the energy estimates is to get an inequality like
\begin{align}
\pt_t E + D \leq \sqrt{E} D,
\end{align}
so one can use the continuity argument to get the desired estimates. Therefore, one still needs $\pt_{t} E^{m,1}_h$, $D^{m,0}_\s$, $D^{m,0}_\p$ on the LHS.

\subsection{Energy estimates for $\ll \pt_z^l\pt_xh \rl^2$}
Taking $\pt_z^l\pt_x$ to (\ref{with e on t_1}), and multiplying by $\pt_z^l \pt_xh$, then integrating it over $\mu(x,v,z)$,
\begin{align}
&\frac{\b\d}{2}\pt_{t} \ll \pt_z^{l}\pt_xh\rl^2 + \d\underbrace{\la  \pt_z^{l}\pt_x^2\p, v\sM\pt_z^l\pt_xh \ra}_{\rom{5}} \underbrace{-\la \mL\pt_z^{l}\pt_xh, \pt_z^{l}\pt_xh\ra}_{\rom{6}} \nonumber\\
=&  \d\sum_{i = 0}^{l}(^{l}_i)\la \underbrace{\pt_z^{l-i}\pt_x^2\p \pt_v\pt_z^{i}h}_{\rom{3}.1}+\underbrace{\pt_z^{l-i} \pt_x\p\pt_v\pt_x\pt_z^ih}_{\rom{3}.2}- \underbrace{\frac{v}{2}\pt_z^{l-i}\pt_x^2\p \pt_z^ih}_{\rom{4}.1} -\underbrace{\frac{v}{2}\pt_z^{l-i}\pt_x\p \pt_z^i\pt_xh}_{\rom{4}.2} , \pt_z^l\pt_xh\ra\nonumber.
\end{align}
Similar to (\ref{good terms}), for $\rom{5}$ and $\rom{6}$, one has,
\begin{align}
\rom{5} = \frac{\d}{2}\pt_t\ll \pt_z^l\pt_x^2\p \rl^2, \quad \rom{6} \geq \l_0\ll (1-\Pi)\pt_z^l\pt_xh \rl^2 + \ll \pt_z^l\pt_x\bu \rl^2.
\end{align}
For the bad terms on the RHS, 
{for $i < l$}, by Lemma \ref{ineqs} (d),
\begin{align}
\rom{3}.1 -\rom{4}.1 \leq C_1\ll \pt_z^ih \rl_{H^1_z(H^1_x)} \left(  \frac{3}{a} \ll \pt_z^l\pt_x\bu \rl^2 +  \frac{2}{a} \ll (1-\Pi)\pt_z^l\pt_xh \rl^2 +a\ll \pt_z^{l-i}\pt_x^2\p \rl^2 \right).\label{first}
\end{align}

{For $i = l$, by Lemma \ref{ineqs} (e),
\begin{align}
\rom{3}.1 -\rom{4}.1 \leq C_1^2\sqrt{\ll \pt_z^lh \rl^2 + \ll \pt_z^l\pt_xh \rl^2} \left(  \frac{3}{a} \ll \pt_z^l\pt_x\bu \rl^2 +  \frac{2}{a} \ll (1-\Pi)\pt_z^l\pt_xh \rl^2 +a\sum_{i\leq 1}\ll \pt_z^i\pt_x^2\p \rl^2 \right).
\end{align}
}

\begin{remark}
{If one treats $i = l$ the same as $i < l$, then the term $\ll \pt_z^mh \rl^2_{H^1_z(H^1_x)}$ cannot be controlled by $E^m_h$, because the term $\ll \pt_z^{m+1}\pt_xh \rl^2$ is not included in the energy term $E^m_h$. That is why we treat all these four estimates differently in (\ref{first}) - (\ref{last}).}
\end{remark}
{For $i > 0$, by Lemma \ref{ineqs} (b),}
\begin{align}
\rom{3}.2 -\rom{4}.2 \leq& C_1\ll \pt_z^{l-i}\pt_x\p \rl_{H^1_z(H^1_x)} \left( a\ll \pt_z^i\pt_x\s \rl^2 + 2 \ll \pt_z^i\pt_x\bu \rl^2 + 2 \ll(1-\Pi) \pt_z^i\pt_xh \rl^2_\nu\right.\nonumber\\
& \left. +  \(2+\frac{1}{a}\) \ll \pt_z^l\pt_x\bu \rl^2 + 2 \ll (1-\Pi)\pt_z^l\pt_xh \rl^2_\nu \right).
\end{align}

{For $i = 0$, by Lemma \ref{ineqs} (c),
\begin{align}
\rom{3}.2 -\rom{4}.2 \leq& C_1^2\sqrt{\ll \pt_z^l\pt_x\p \rl^2 + \ll \pt_z^l\pt_x^2\p \rl^2 } \left( a\sum_{i\leq 1}\ll \pt_z^i\pt_x\s \rl^2 + 2 \sum_{i\leq 1}\ll \pt_z^i\pt_x\bu \rl^2 \right.\nonumber\\
& \left.+ 2 \sum_{i\leq 1}\ll(1-\Pi) \pt_z^i\pt_xh \rl^2_\nu + \(2+\frac{1}{a}\) \ll \pt_z^l\pt_x\bu \rl^2 + 2 \ll (1-\Pi)\pt_z^l\pt_xh \rl^2_\nu \right).\label{last}
\end{align}
}

Combining all the terms gives,
Summing $l$ from $0$ to $m$ gives,
{
\begin{align}
&\frac{\d}{2}\pt_{t} \left[\b E^{m,1}_h + \d    E^{m,1}_\p \right] + \l_0D^{m,1}_h + D^{m,1}_u
\nonumber\\
\leq& C_1\d\sum_{l = 1}^m\( \sum_{i = 0}^{l-1}(^{l}_i)\ll \pt_z^ih\rl_{H^1_z(H^1_x)}\)\(\frac{3}{a}\ll\pt_z^{l}\pt_x\bu\rl^2 +\frac{2}{a}\ll (1-\Pi)\pt_z^{l}\pt_xh\rl^2_\nu + a\ll \pt_z^{l-i}\pt_x^2\p \rl^2 \) \nonumber\\
&+ C_1\d\sum_{l = 1}^m\( \sum_{i = 1}^{l}(^{l}_i)\ll \pt_z^{l-i} \pt_x\p\rl_{H^1_z(H^1_x)}\)\left( a\ll \pt_z^i\pt_x\s \rl^2 + 2 \ll \pt_z^i\pt_x\bu \rl^2 + 2 \ll(1-\Pi) \pt_z^i\pt_xh \rl^2_\nu\right.\nonumber\\
& \left. +  \(2+\frac{1}{a}\) \ll \pt_z^l\pt_x\bu \rl^2 + 2 \ll (1-\Pi)\pt_z^l\pt_xh \rl^2_\nu \right)\nonumber\\
&+ C_1^2\d\sum_{l = 0}^m\sqrt{\ll \pt_z^lh \rl^2 + \ll \pt_z^l\pt_xh \rl^2} \left(  \frac{3}{a} \ll \pt_z^l\pt_x\bu \rl^2 +  \frac{2}{a} \ll (1-\Pi)\pt_z^l\pt_xh \rl^2 +a\sum_{i\leq 1}\ll \pt_z^i\pt_x^2\p \rl^2 \right)\nonumber\\
&+C_1^2\d\sum_{l = 0}^m\sqrt{\ll \pt_z^l\pt_x\p \rl^2 + \ll \pt_z^l\pt_x^2\p \rl^2 } \left( a\sum_{i\leq 1}\ll \pt_z^i\pt_x\s \rl^2 + 2 \sum_{i\leq 1}\ll \pt_z^i\pt_x\bu \rl^2 \right.\nonumber\\
& \left.+ 2 \sum_{i\leq 1}\ll(1-\Pi) \pt_z^i\pt_xh \rl^2_\nu + \(2+\frac{1}{a}\) \ll \pt_z^l\pt_x\bu \rl^2 + 2 \ll (1-\Pi)\pt_z^l\pt_xh \rl^2_\nu \right)\nonumber\\
\leq & BC_1\d \sqrt{E^m_h}\(\frac{3}{a}D^{m,1}_u + \frac{2}{a}D^{m,1}_h + a D^{m,1}_\p\) \nonumber\\
&+ BC_1\d\sqrt{E^m_\p}\(aD^{m,1}_\s + 2D^{m,1}_u + 2D^{m,1}_h + \(2+\frac{1}{a}\)D^{m,1}_u + 2D^{m,1}_h\)\nonumber\\
&+C_1^2\d\sqrt{E^m_h}\(\frac{3}{a}D^{m,1}_u + \frac{2}{a}D^{m,1}_h\) + aC_1^2\d\sqrt{m+1}\sqrt{E^m_h}D^{1,1}_\p \nonumber\\
&+C_1^2\d\sqrt{m+1}\sqrt{E^m_\p}\( aD^{1,1}_\s + 2D^{1,1}_u + 2D^{1,1}_h\) + C_1^2\d\sqrt{E^m_\p}\( \(2+\frac{1}{a}\)D^{m,1}_u + 2D^{m,1}_h \)\nonumber\\
\leq &\frac{(B+1)C_1^2\d}{a}\sqrt{E^m_h} \left(3D^{m,1}_u + 2D^{m,1}_h \right) + \(B+1\)C_1^2\d\sqrt{E^m_\p} \left(\(4+\frac{1}{a}\)D^{m,1}_u + 4D^{m,1}_h \right)  \nonumber\\
 &+a\(B+\sqrt{m+1}\)C_1^2\d\sqrt{E^m_h}D^{m,1}_\p +a\(B+\sqrt{m+1}\)C_1^2\d\sqrt{E^m_\p}D^{m,1}_\s,
 \label{pt_xh}
\end{align}
}
where $A$ is defined as (\ref{def of A}). Now combining (\ref{h}) and (\ref{pt_xh}) completes the energy estimates for the microscopic system,
\begin{align}
&\frac{\d}{2}\pt_{t} \left[ \b \underbrace{E^m_h}_{\rom{1}} + \d \underbrace{E^m_\p}_{\rom{5}}\right] + \underbrace{\l_0D^m_h +D^m_\bu}_{\rom{6}}\nonumber\\
\leq& \underbrace{\frac{AC_1^2\d}{a}\sqrt{E^m_h} \left(3D^m_\bu +2D^m_h\right) + 2AC_1^2\d\sqrt{E^m_\p}  \left(\(4+\frac{1}{a}\)D^m_\bu +4D^m_h\right)}_{\rom{3}+\rom{4}}\nonumber\\
&\underbrace{+aAC_1^2\d\sqrt{E^m_h}D^m_\p + aAC_1^2\d\sqrt{E^m_\p}D^m_\s}_{\rom{3}+\rom{4}}.
\end{align}

Up to now, one still needs the  dissipations $D^m_\s$ and $D^m_\p$ on LHS to balance the bad terms on RHS. So next we turn to the  macroscopic system.

\section{Energy Estimates on the Macroscopic System}
\label{sec:macro}
We now prove (\ref{eqn: macro}) in Lemma \ref{lemma:est}.
\subsection{Dissipation terms $\ll \pt_z^l\s \rl^2$ and $\ll\pt_z^l\pt_x\p \rl^2$}
Taking $\pt^l_z$ to (\ref{eqn for u with e_2}) and multiplying by $\pt_z^l\pt_x\p$, then integrating it over $\mu(x,z)$, one has,
\begin{align}
&\b\d \underbrace{\la \pt_{t} \pt_z^l\bu,  \pt_z^l\pt_x\p\ra}_{\rom{1}}+ \b\underbrace{\la \pt_z^l\pt_x\s, \pt_z^l\pt_x\p\ra}_{\rom{2}.1} + \underbrace{\la \pt^l_z\bu, \pt_z^l\pt_x\p\ra}_{\rom{6}} +\d\underbrace{\ll \pt_z^l\pt_x\p\rl^2}_{\rom{5}}\nonumber\\
=& -\b\underbrace{ \la (1-\Pi)\pt^l_z\pt_xh , v^2\sM\pt_z^l\pt_x\p\ra}_{\rom{2}.2} - \d\sum_{i=0}^l(^l_i)\underbrace{\la \pt_z^{l-i}\pt_x\p\pt_z^i\s, \pt_z^l\pt_x\p\ra}_{\rom{3}} .
\label{pc_111}
\end{align}
First one has,
\begin{align}
&\rom{1}= \pt_{t}  \la \pt_z^l\bu, \pt_z^l\pt_x\p \ra  - \la \pt_z^l\bu,  \pt_{t}\pt_z^l \pt_x\p\ra,
\end{align}
then by Lemma \ref{ineqs} (d),
\begin{align}
&\la \pt_z^l\bu, \pt_z^l\pt_x\pt_t\p \ra = \d\la \pt_z^l\pt_t\s, \pt_z^l\pt_t\p\ra = \d\ll\pt_z^l\pt_x \pt_t\p\rl^2 \leq \frac{1}{\d}\ll \pt_z^l\bu \rl^2.
\end{align}
$\rom{2}.1$ and $\rom{6}$ are "good terms" here, since
\begin{align}
&\rom{2}.1 =\la  \pt_z^l\s, -\pt_z^l\pt_x^2\p\ra = \ll \pt_z^l\s \rl^2,\\
&\rom{6} = - \la  \pt_z^l\pt_x\bu,  \pt_z^l\p\ra = \d\la \pt_z^l\pt_t\s, \pt_z^l\p\ra = \d\la \pt_z^l\pt_x\pt_t\p, \pt_z^l \pt_x\p \ra =  \frac{\d}{2}\pt_{t}\ll \pt_z^l\pt_x\p \rl^2,
\end{align}
while $\rom{2}.2$ and $\rom{3}$ are "bad terms",
\begin{align}
- \rom{2}.2  =& \la (1-\Pi)\pt_z^lh, v^2\sM\pt_z^l\pt_x^2\p\ra \leq  \frac{1}{2}\ll v\sM\pt_z^l \s \rl^2+ \frac{1}{2} \ll v(1-\Pi)\pt_z^lh \rl^2 \nonumber\\
&\leq \frac{1}{2}\ll\pt_z^l \s \rl^2+ \frac{1}{2} \ll (1-\Pi)\pt_z^lh \rl^2_\nu.
\end{align}
Note, for $l = 0$, 
\begin{align}
    -\rom{3} = \la \pt_x\p\s, \pt_x\p \ra = -\la \pt_x^2\p, (\pt_x\p)^2\ra = \la \pt_x\p, 2\pt_x\p \pt_x^2\p \ra = -2\la \pt_x\p \s, \pt_x\p \ra,
\end{align}
which implies, 
\begin{align}
    -\rom{3} = 0.
\end{align}
For $l>0$, and $i = 0$, 
\begin{align}
 -\rom{3} =&  \la \s, \(\pt_z^l\pt_x\p\)^2\ra= \la \pt_x\p, \pt_x\(\pt_z^l\pt_x\p\)^2 \ra = -2\la \pt_x\p\pt_z^l\s, \pt_z^l\pt_x\p \ra\nonumber\\
 \leq& C_1\ll \pt_x\p \rl_{H^1_z(H^1_x)}\(\ll \pt_z^l\s\rl^2  + \ll \pt_z^l\pt_x\p \rl^2\),
\end{align}
and for $0< i \leq l$,
\begin{align}
-\rom{3} \leq& \frac{C_1}{2}\ll \pt_z^{l-i}\pt_x\p \rl_{H^1_z(H^1_x)}\( \ll \pt_z^i\s \rl^2 + \ll \pt_z^l\pt_x\p \rl^2\).
\end{align}
Combining all terms in (\ref{pc_111}), one has,
\begin{align}
&\d\pt_{t}\left[ \b\la \pt_z^l \bu, \pt_z^l\pt_x\p \ra  + \frac{1}{2}\ll \pt_z^l\pt_x\p \rl^2\right] + \frac{\b}{2}\ll\pt_z^l \s \rl^2 +  \d\ll \pt_z^l\pt_x\p\rl^2\nonumber\\
 \leq&\b\ll \pt_z^l\bu \rl^2 + \frac{\b}{2}\ll (1-\Pi)\pt_z^l h \rl^2_\nu + 2C_1\d\sum_{l\neq 0, i=1}^{l}(^l_i)\ll \pt_z^{l-i}\pt_x\p\rl_{H^1_z(H^1_x)} \(\ll \pt_z^{i}\s \rl^2 + \ll\pt_z^l\pt_x\p\rl^2\).
  \end{align}
Summing $l$ from $0$ to $m$ gives,
\begin{align}
&\d\pt_{t}\left[ \b\sum_{l=0}^m\la \pt_z^l \bu, \pt_z^l\pt_x\p \ra  + \frac{1}{2}E^{m,0}_\p\right] + \frac{\b}{2}D^{m,0}_\s +  \d D^{m,0}_\p\nonumber\\
  \leq &\b D^{m,0}_u + \frac{\b}{2}D^{m,0}_h + 2AC_1\d\sqrt{E^m_\p} \(D^{m,0}_\s + D^{m,0}_\p\).
    \label{macro_1}
  \end{align}

\subsection{Dissipation terms $\ll \pt_z^l\pt_x\s \rl^2$ and $\ll\pt_z^l\pt_x^2\p \rl^2$ }
Taking $\pt^{l}_z$ to (\ref{eqn for u with e_1}) and multiplying by $\pt_z^{l}\pt_x\s$, then integrating it over $\mu(x,z)$,
\begin{align}
&\b\d\underbrace{\la \pt_{t} \pt_z^{l}\bu, \pt_z^{l}\pt_x\s\ra }_{\rom{1}}+\b\underbrace{\ll \pt_z^{l}\pt_x\s \rl^2}_{\rom{2}.1} + \underbrace{\la \pt^{l}_z\bu, \pt_z^{l}\pt_x\s\ra }_{\rom{6}} + \d\underbrace{\la \pt_z^{l}\pt_x\p, \pt_z^{l}\pt_x\s\ra}_{\rom{5}} \nonumber\\
=&- \b\underbrace{ \la (1-\Pi)\pt_x\pt^{l}_zh, v^2\sM\pt_z^{l}\pt_x\s \ra}_{\rom{2}.2} - \d\sum_{i=0}^{l}(^{l}_i)\underbrace{\la \pt_z^{l-i}\pt_x\p\pt_z^i\s,\pt_z^{l}\pt_x\s\ra }_{\rom{3}}.
\label{pc_222}
\end{align}
Note that,
\begin{align}
\rom{1} &= \pt_{t} \la \pt_z^{l}\bu, \pt_x\pt_z^{l}\s \ra - \la\pt_z^{l}\bu, \pt_z^{l}\pt_x\pt_t\s \ra = \pt_{t} \la \pt_z^{l}\pt_x\bu, \pt_z^{l}\pt_x^2\p \ra - \frac{1}{\d}\ll  \pt_z^{l}\pt_x\bu\rl^2,\label{number1}\\
\rom{6} &=\la \pt_z^{l}\bu, \pt_z^{l}\pt_x\s  \ra = \d\la \pt_z^{l}\pt_t\s, \pt_z^{l}\s  \ra = \frac{\d}{2}\pt_{t} \ll  \pt_z^{l}\s \rl^2 = \frac{\d}{2}\pt_{t} \ll  \pt_z^{l}\pt_x^2\p \rl^2,\\
\rom{5} &= -\la \pt_z^{l}\pt_x^2\p,  \pt_z^{l}\s \ra =\ll \pt_z^{l}\pt_x^2\p\rl^2, \\
-\rom{2}.2 &\leq   \frac{1}{2}\ll \pt_z^{l}\pt_x\s \rl^2 + \frac{1}{2}\ll  (1-\Pi)\pt_z^{l}\pt_xh\rl^2_\nu, \\
\text{For }i &\neq 0\nonumber\\
-\rom{3} &\leq \frac{C_1}{2} \ll \pt_z^{l-i}\pt_x\p \rl_{H^1_z(H^1_x)}( \ll \pt_z^i\s \rl^2+ \ll \pt_z^{l}\pt_x\s \rl^2 ),\\
\text{For }i &= 0\nonumber\\
-\rom{3} &\leq \frac{C_1^2}{2} \sqrt{\ll \pt_z^l\pt_x\p \rl^2 + \ll \pt_z^l\pt_x^2\p \rl^2}\( \sum_{i\leq 1}\ll \pt_z^i\s \rl^2 + \ll \pt_z^{l}\pt_x\s \rl^2 \)\label{number last}.
\end{align}
Using (\ref{number1}) - (\ref{number last}) in (\ref{pc_222}) implies,
\begin{align}
 &\d\pt_{t}\left[\b \la \pt_z^{l}\pt_x\bu, \pt_z^{l}\pt_x^2\p \ra + \frac{1}{2} \ll  \pt_z^{l}\pt_x^2\p \rl^2 \right] + \frac{ \b}{2}\ll \pt_z^{l}\pt_x\s \rl^2 +\d \ll \pt_z^{l}\pt_x^2\p\rl^2  \nonumber\\
 \leq&\frac{\b}{2} \ll  (1-\Pi)\pt_z^{l}\pt_xh\rl^2_\nu + \b\ll  \pt_z^{l}\pt_x\bu\rl^2 + \frac{C_1\d}{2 }\sum_{i=1}^{l}(^{l}_i) \ll \pt_z^{l-i}\pt_x\p \rl_{H^1_z(H^1_x)}( \ll \pt_z^i\s \rl^2+ \ll \pt_z^{l}\pt_x\s \rl^2 ) \nonumber\\
 &+\frac{C_1^2}{2} \sqrt{\ll \pt_z^l\pt_x\p \rl^2 + \ll \pt_z^l\pt_x^2\p \rl^2}\( \sum_{i\leq 1}\ll \pt_z^i\s \rl^2 + \ll \pt_z^{l}\pt_x\s \rl^2 \).
\end{align}
Summing $l$ from $0$ to $m-1$, one has,
\begin{align}
 &\d\pt_{t}\left[\b \sum_{l=0}^{m}\la  \pt_z^{l}\pt_x\bu, \pt_z^{l}\pt_x^2\p \ra + \frac{1}{2} E^{m,1}_\p \right] + \frac{ \b}{2} D^{m,1}_\s + \d D^{m,1}_\p  \nonumber\\
  \leq &\b D^{m,1}_u +\frac{\b}{2}D^{m,1}_h +AC_1^2\d\sqrt{E^m_\p} D^m_\s.
  \label{macro_2}
\end{align}
Combining (\ref{macro_1}) and (\ref{macro_2}), one finishes the energy estimates for the microscopic system,
\begin{align}
 &\d\pt_{t}\left[\underbrace{\b \sum_{l=0}^{m}\la  \pt_z^{l}\bu, \pt_z^{l}\pt_x\p \ra + \b\sum_{l=0}^m\la \pt_z^l\pt_x \bu, \pt_z^l\pt_x^2\p \ra}_{\rom{1}} + \underbrace{\frac{1}{2} E^m_\p}_{\rom{6}} \right] + \frac{ \b}{2}\underbrace{D^m_\s}_{\rom{2}.1} + \d \underbrace{D^m_\p}_{\rom{5}}  \nonumber\\
  \leq &\b \underbrace{D^m_\bu}_{\rom{1}}+ \frac{\b}{2}\underbrace{D^m_h}_{\rom{2}.2}   + \underbrace{AC_1^2\d\sqrt{E^m_\p} \( 3D^m_\s + 2D^m_\p\)}_{\rom{3}}.
  \end{align}

\section{Exponential Decay to the Maxwellian}
\label{sec: expo decay}
Before we do the analysis for the two energy estimates, we first go through the process in a more general framework. If one has the  energy estimate,
\begin{align}
\frac{1}{2}\pt_t \hat{E} + \alpha D \leq \beta \sqrt{\hat{E}}D,
\label{general}
\end{align}
and one wants to get an exponential decay for $E$, then one requires,
\begin{align}
&\text{REQUIREMENT 1: }\quad \hat{E}  \sim E \leq D .
\end{align}
On the other hand, one needs the dissipations on the LHS to balance the "bad terms" on the  RHS, so one requires,
\begin{align}
&\text{REQUIREMENT 2: }\quad \a > 0.
\end{align}
Since (\ref{general}) is equivalent to,
\begin{align}
\pt_t \sqrt{\hat{E}} \apprle& \frac{1}{\sqrt{\hat{E}}}\left(\beta\sqrt{\hat{E}} -\alpha\right)D,
\end{align}
therefore,  if one assumes the initial data satisfies,
\begin{align}
{\frac{1}{\sqrt{\hat{E}}}\left(\beta\sqrt{\hat{E}} -\alpha\right)D \apprle -\frac{\alpha}{2\sqrt{\hat{E}}}D, \quad\text{or equivalently, }\quad
\sqrt{\hat{E}(0)} \leq O\left(\frac{\alpha}{2\beta}\right) ,}
\end{align}
then by standard continuity argument, since $\sqrt{\hat{E}}$ is decreasing, so for $t>0$,
\begin{align}
\pt_t \sqrt{\hat{E}} \apprle -\frac{\a}{2\sqrt{\hat{E}}}D,
\label{333}
\end{align}
and $D\geq \hat{E}$,  (\ref{333}) implies the exponential decay,
\begin{align}
 \hat{E} \apprle e^{-C\alpha t}\hat{E}(0)\quad \sim \quad E(t) \apprle e^{-C\alpha t}E(0).
\end{align}
Furthermore, if one wants to get the optimal convergence rate with least restriction on initial data, then one needs,
\begin{align}
\text{REQUIREMENT 3: }\quad &\sqrt{\hat{E}(0)} \leq O\(\frac{\a}{\beta}\) \text{ independent of small parameters.}\nonumber\\
\text{REQUIREMENT 4: }\quad &\alpha \text{ should be as large as possible. }
\end{align}

\begin{remark}
\label{explain diff}
Without uncertainty, if one directly uses the energy estimates from \cite{hwang2013vlasov}, for the high field regime, where $\d = 1$ , then when the small parameter $\e$ are put in, the energy estimates become,
\begin{align}
&\frac{1}{2}\pt_{t} \left[  E^m_h + \frac{1}{\e}E^m_\p\right] + \frac{1}{\e}\(D^m_h +D^m_\bu\)\nonumber\\
\apprle & \frac{1}{\e}\sqrt{E^m_h + E^m_\p}(D^m_\bu+D^m_h + D^m_\p +D^m_\s),
\label{micro_1}
\end{align}
and,
\begin{align}
 &\pt_{t}\left[ \sum_{l=0}^{m}\la  \pt_z^{l}\bu, \pt_z^{l}\pt_x\p \ra + \sum_{l=0}^m\la \pt_z^l \pt_x\bu, \pt_z^l\pt_x^2\p \ra + \frac{1}{2\e} E^m_\p \right] + \frac{1 }{2}D^m_\s + \frac{1}{\e} D^m_\p  \nonumber\\
  \apprle & D^m_\bu + D^m_h  +\frac{1}{\e}\sqrt{E^m_h+E^m_\p}( D^m_\p +D^m_\s).
  \label{macro_22}
  \end{align}
 Let
$G^m =  \sum_{l=0}^{m-1}\la  \pt_z^{l}\bu, \pt_z^{l}\pt_x\p \ra + \sum_{l=0}^m\la \pt_z^l \pt_x\bu, \pt_z^l\pt_x^2\p \ra + \frac{1}{2\e} E^m_\p.
$
 Since
$-\e E^m_h + \frac{1}{4\e}E^m_\p \leq G^m\leq \e E^m_h + \frac{3}{4\e}E^m_\p,
$
so if one  combines the microscopic and macroscopic energy estimates (\ref{micro_1}) + $\g$ (\ref{macro_22}), one needs $\g \leq O(\frac{1}{\e})$ to satisfy REQUIREMENT 1. Furthermore, if one wants to get the optimal convergence rate based on this energy estimate, then one needs the dissipation terms to be as large as possible, that is $\g$ as large as possible, which means $\g = O(\frac{1}{\e})$. Therefore one derives,
\begin{align}
&\frac{1}{2}\pt_t \hat{E}^m + \frac{1}{\e}(D^m_h  + D^m_\bu) + \frac{1}{\e}D^m_\s + \frac{1}{\e^2}D^m_\p \nonumber\\
\apprle& \frac{1}{\e}\sqrt{E^m_h + E^m_\p} (D^m_\bu+D^m_h) +\frac{1}{\e^2}\sqrt{E^m_h + E^m_\p}D^m_\s+ \frac{1}{\e^2}\sqrt{E^m_h + E^m_\p}D^m_\p ,
\label{comb_1}
\end{align}
where
$\hat{E}^m = \(E^m_h + \frac{1}{\e}E^m_\p\) + \frac{1}{\e}G^m \sim E^m_h + \frac{1}{\e^2}E^m_\p.
$
 So (\ref{comb_1}) leads to,
\begin{align}
\frac{1}{2}\pt_t \hat{E}^m +  \frac{1}{\e}(D^m_h  + D^m_\bu) + \frac{1}{\e}D^m_\s + \frac{1}{\e^2}D^m_\p  \leq \frac{1}{\e}\sqrt{\hat{E}^m} (D^m_h  + D^m_\bu) + \frac{1}{\e^2}\sqrt{\hat{E}^m} D^m_\s  +\frac{1}{\e^2}\sqrt{\hat{E}^m}  D^m_\bu.
\label{result_1}
\end{align}
Compare the term $D^m_\s$, one notes that $\sqrt{\hat{E}^m}$ needs to be $O(\e)$ such that the bad term on the RHS can be controlled by the $O(1)$ dissipation on the LHS. That is one requires
\begin{align}
E^m_h(0) + \frac{1}{\e^2}E^m_\p (0) \apprle O( \e)
\end{align}
to obtain the exponential decay
\begin{align}
E^m_h + \frac{1}{\e}E^m_\p \apprle e^{-O(1)t}\( E^m_h (0)+ \frac{1}{\e}E^m_\p\).
\end{align}
This means the initial data $E^m_h = O(\e)$, $E^m_\p  = O(\e^3)$.
These conditions are much stronger than the one in  (\ref{initial cond_1}) of Theorem \ref{expo decay}.

However, if the coefficient of the $D_\s$ only depends on $E^m_\p$, like the estimates we obtained in Lemma \ref{lemma:est}, then (\ref{comb_1}) becomes,
\begin{align}
&\frac{1}{2}\pt_t \hat{E}^m + \frac{1}{\e}(D^m_h  + D^m_\bu) + \frac{1}{\e}D^m_\s + \frac{1}{\e^2}D^m_\p \nonumber\\
\apprle& \frac{1}{\e}\sqrt{E^m_h + E^m_\p} (D^m_\bu+D^m_h) +\frac{1}{\e}\sqrt{\frac{1}{\e^2}E^m_\p}D^m_\s+ \frac{1}{\e^2}\sqrt{E^m_h}D^m_\p \nonumber\\
\apprle &\frac{1}{\e}\sqrt{\hat{E}^m} (D^m_h  + D^m_\bu) + \frac{1}{\e}\sqrt{\hat{E}^m} D^m_\s  +\frac{1}{\e^2}\sqrt{\hat{E}^m}  D^m_\bu.
\end{align}
Now the bad terms and good terms can be well balanced even if the initial data of $\hat{E}^m$ is $O(1)$.
\end{remark}

\subsection{The high field regime}
For the high field regime, where $\d = 1$, set
\begin{align}
&F^m = \b E^m_h +   E^m_\p , \quad G^m =\e \sum_{l=0}^{m-1}\la  \pt_z^{l}\bu, \pt_z^{l}\pt_x\p \ra + \b\sum_{l=0}^m\la \pt_z^l \pt_x\bu, \pt_z^l\pt_x^2\p \ra + \frac{1}{2} E^m_\p, \nonumber\\
&\hat{E}^m = F^m + \frac{2\l_0}{\e} G^m, \quad E^m = \e E^m_h +  \frac{1}{\e}E^m_\p, \qd a = 1 \label{rescale_hf}
\end{align}
where $F^m$ is the term inside $\pt_t$ in (\ref{eqn: micro}) and $G^m$ is that in  (\ref{eqn: macro}).

{By (\ref{u s h}) and Young's Inequality}, one can bound $G^m$ by
\begin{align}
 -\e^2 E^m_h + (\frac{1}{2} - \frac{1}{4}) E^m_\p \leq &G^m \leq \e^2E^m_h +  (\frac{1}{2} + \frac{1}{4}) E^m_\p, \nonumber\\
  -\b^{2} E^m_h +\frac{1}{4}E^m_\p \leq &G^m \leq  \b^{2}E^m_h+ \frac{3}{4} E^m_\p.
  \label{bounds of G}
 \end{align}
Since $\l_0 \leq \frac{1}{4}$, thus one obtains,
\begin{align}
 (1 - 2\l_0)\e E^m_h +( \e + \frac{\l_0}{2}) \frac{1}{\e}E^m_\p \leq& \hat{E}^m \leq \( 1 + 2\l_0 \)\e E^m_h + \( \e + \frac{3\l_0}{2}\)\frac{1}{\e} E^m_\p,\nonumber\\
\frac{\l_0}{2}E^m\leq& \hat{E}^m \leq \frac{3}{2} E^m,
\label{ineq of E}
\end{align}
or equivalently,
\begin{align}
 \frac{3}{2}\sqrt{\hat{E}^m}  \leq  \sqrt{\e E^m_h} &+   \sqrt{\frac{1}{\e}E^m_\p} \leq \frac{2}{\l_0}\sqrt{\hat{E}^m}.
\end{align}
So one has the equivalence between the energies $E^m$ and $\hat{E}^m$, {besides, the dissipation terms can be lower bounded by $E^m$,}
\begin{align}
 \hat{E}^m\sim E^m\leq \e \(D^m_\bu +  D^m_h +  D^m_\s \)+ \frac{1}{\e} D^m_\p.
 \label{612}
\end{align}

By  (\ref{eqn: micro}) + $\frac{\l_0}{\e}$(\ref{eqn: macro}), one has the
energy estimates,
\begin{align}
&\frac{1}{2}\pt_{t}\hat{E}^m + \left(\l_0 - \frac{\l_0}{2}\right)D^m_h +\left(1 - \l_0\right) D^m_\bu + \frac{\l_0}{2}D^m_\s + \frac{\l_0}{\e} D^m_\p\nonumber\\
\leq& AC_1^2\left[ \left( \frac{1}{\sqrt{\e}}\sqrt{\e E^m_h} + 2 \sqrt{\e}\sqrt{\frac{1}{\e}E^m_\p} \right)(5D^m_\bu + 4D^m_h ) + \sqrt{\e}\(1+\frac{1}{\b}\)\sqrt{\frac{1}{\e}E^m_\p} D^m_\s + \frac{1}{\sqrt{\e}}\(1+\frac{1}{\b}\)\sqrt{\e E^m_h} D^m_\p  \right]\nonumber\\
\leq & \frac{10}{\sqrt{\b}}AC_1^2\(\frac{2}{\l_0}\sqrt{\hat{E}^m}\) (D^m_h + D^m_\bu )  + \frac{2}{\sqrt{\e}}AC_1^2\(\frac{2}{\l_0}\sqrt{\hat{E}^m}\)D^m_\s + \frac{2}{\e^{3/2}}AC_1^2\(\frac{2}{\l_0}\sqrt{\hat{E}^m}\)D^m_\p,
\end{align}
which implies
\begin{align}
&\frac{1}{2}\pt_{t}\hat{E}^m + \frac{\l_0}{2} \(D^m_h + D^m_\bu\) + \frac{\l_0}{2}D^m_\s + \frac{\l_0}{\e} D^m_\p\nonumber\\
\leq & \frac{20AC_1^2}{\l_0\sqrt{\b}}\sqrt{\hat{E}^m}(D^m_h + D^m_\bu )  + \frac{4AC_1^2}{\l_0\sqrt{\e}}\sqrt{\hat{E}^m}D^m_\s + \frac{4AC_1^2}{\l_0\e^{3/2}}\sqrt{\hat{E}^m}D^m_\psi.
\end{align}

Therefore, by standard continuity argument, under the condition of,
\begin{align}
&\sqrt{\hat{E}^m(0)} \leq \min\left\{  \frac{ \frac{\l_0}{4}    }{\frac{20AC_1^2}{\l_0\b^{1/2}}},\   \frac{ \frac{\l_0}{4}    }{\frac{4AC_1^2}{\l_0\b^{1/2}}}, \  \frac{ \frac{\l_0}{2\e}    }{\frac{4AC_1^2}{\l_0\e^{3/2}}}\right\},\nonumber
\end{align}
which holds if,
\begin{align}
&\hat{E}^m(0) \leq\left(  \frac{\l_0^2\e^{1/2}}{80AC_1^2}\right)^2,\nonumber
\end{align}
or equivalently,
\begin{align}
& E^m_h(0) + \frac{1}{\b^2}E^m_\p(0) \leq  \frac{2\l_0^3}{\(80AC_1^2\)^2},
\end{align}
one then has the estimate,
\begin{align}
& \frac{1}{2}\pt_t\hat{E}^m + \frac{\l_0}{4}(D^m_h + D^m_\bu + D^m_\s + \frac{1}{\e}D^m_\p) \leq 0.
\label{ineq of estimates}
\end{align}
which implies,
\begin{align}
& \frac{1}{2}\hat{E}^m(t) - \frac{1}{2}\hat{E}^m(0)\leq -\frac{\l_0}{4} \int_0^t\( E^m_h(s) + \frac{1}{\e}E^m_\p(s)\) ds \nonumber\\
 & \frac{\l_0}{4}\( \e E^m_h(t)+ \frac{1}{\e}E^m_\p(t) \) \leq -\frac{\l_0}{4} \int_0^t\( E^m_h(s) + \frac{1}{\e}E^m_\p(s)\) ds + \frac{3}{4}\(\e E^m_h(0) + \frac{1}{\e}E^m_\p(0)\) \nonumber\\
 &  E^m_h(t) \leq -\frac{1}{\e} \int_0^t E^m_h(s) ds +  \frac{3}{\e\l_0}\(\e E^m_h(0) + \frac{1}{\e}E^m_\p(0)\)\nonumber\\
 & E^m_h(t) \leq \frac{3}{\l_0}e^{-\frac{t}{\e}}\( E^m_h(0) + \frac{1}{\e^2}E^m_\p(0)\).
\end{align}
Similarly, for $E^m_\p(t)$, 
\begin{align}
& E^m_\p(t)  \leq - \int_0^tE^m_\p(s) ds + \frac{3\e}{\l_0}\(\e E^m_h(0) + \frac{1}{\e}E^m_\p(0)\) \nonumber\\
&E^m_\p(t) \leq \frac{3}{\l_0}e^{-t}\(\e^2 E^m_h(0) + E^m_\p(0)\).\label{ineq of estimates_2}
\end{align}
This completes the proof of (\ref{conclusion: HFL}) in Theorem \ref{expo decay}.

\subsection{The parabolic regime}
For the parabolic regime, where $\d = \b$, set
 \begin{align}
 &F^m = \b E^m_h + \b  E^m_\p , \quad G^m =\b \sum_{l=0}^{m-1}\la  \pt_z^{l}\bu, \pt_z^{l}\pt_x\p \ra + \b \sum_{l=0}^m\la \pt_z^l \pt_x\bu, \pt_z^l\pt_x^2\p \ra + \frac{1}{2} E^m_\p ,\\
&\hat{E}^m = F^m + 2\l_0 G^m, \quad E^m =  \b E^m_h +E^m_\p, \qd a = \sqrt{\e}.
\end{align}
Similar to (\ref{bounds of G}), the bounds of $G^m$ is,
\begin{align}
 -\b E^m_h + \frac{1}{4} E^m_\p\leq -\e E^m_h + \(\frac{1}{2} - \frac{\e}{4}\)E^m_\p \leq &G^m\leq \e E^m_h + \(\frac{1}{2} + \frac{\e}{4}\)E^m_\p \leq \b E^m_h+\frac{3}{4} E^m_\p,
 \end{align}
and $\l_0 \leq \frac{1}{4}$, so one obtains,
\begin{align}
 (1-2\l_0)\e E^m_h + (\e + \frac{\l_0}{2})E^m_\p \leq& \hat{E}^m\leq (1+2\l_0)\e E^m_h + (\e + \frac{3\l_0}{2}) ,\nonumber\\
\frac{\l_0}{2}E^m \leq &\hat{E}^m\leq \frac{3}{2}E^m,\label{ineq of E_2}
\end{align}
or equivalently,
\begin{align}
\frac{2}{3}\sqrt{\hat{E}^m} \leq \e^{1/2}\sqrt{E^m_h} &+ \sqrt{E^m_\p}\leq\frac{2}{\l_0}\sqrt{\hat{E}^m}.
\end{align}
By  (\ref{eqn: micro}) + $\l_0$(\ref{eqn: macro}), one has energy estimates,
\begin{align}
&\frac{1}{2}\pt_t\hat{E}^m +\( \frac{\l_0}{\e} - \frac{\l_0}{2}\)D^m_h+  \(\frac{1}{\e} - \l_0\)D^m_\bu    + \frac{\l_0}{2} D^m_\s  + \l_0  D^m_\p \nonumber\\
\leq& \frac{AC_1^2}{\sqrt{\e}}\(\sqrt{E_h^m} + 2\sqrt{E_\p^m}\)\(5D^m_\bu + 4D^m_h\)+ AC_1^2\(1+\l_0\) \sqrt{E^m_\p}  D^m_\s+ \sqrt{\e} AC_1^2 \sqrt{E^m_h} D^m_\p+2\l_0AC_1^2 \sqrt{E^m_\p} D^m_\p \nonumber\\
\leq & 10AC_1^2\( \frac{1}{\e^{1/2}}\frac{2}{\l_0}\sqrt{\hat{E}^m}\) \(D^m_\bu + D^m_h\) + 2AC_1^2\( \frac{2}{\l_0}\sqrt{\hat{E}^m} \) D^m_\s+ 2AC_1^2\(\frac{2}{\l_0}\sqrt{\hat{E}^m} \) D^m_\p,
\end{align}
which implies,
\begin{align}
&\frac{1}{2}\pt_t\hat{E}^m + \frac{\l_0}{2\b^2} \left(  \b D^m_h + \b D^m_\bu) \right)   + \frac{\l_0}{2\b}\left(\b D^m_\s\right)  + \l_0  D^m_\p\nonumber\\
\leq&\frac{20AC_1^2}{\l_0\b^{3/2}} \sqrt{\hat{E}^m}  \left(  \b D^m_h + \b D^m_\bu) \right) +  \frac{4AC_1^2}{\l_0\b}\sqrt{\hat{E}^m}  \(\e D^m_\s\)+ \frac{4AC_1^2}{\l_0}\sqrt{\hat{E}^m}  D^m_\p.
\end{align}
So if the initial data satisfies the condition
\begin{align}
& \sqrt{\hat{E}^m(0)} \leq \min \left\{ \frac{\frac{\l_0}{4\e^2}}{\frac{20AC_1^2}{\l_0\b^{3/2}}}, \frac{\frac{\l_0}{4\e}}{\frac{4AC_1^2}{\l_0\b}}, \frac{\frac{\l_0}{2}}{\frac{4AC_1^2}{\l_0}} \right\}\nonumber\\
& \hat{E}^m(0) \leq \(\frac{\l_0^2}{80AC_1^2}\)^2 ,\nonumber\\
\text{or equivalently, }\quad &E^m_h(0) +\frac{1}{\b}E^m_\p(0)\leq \frac{2\l_0^3}{(80AC_1)^2\e} ,
\end{align}
then similar to (\ref{ineq of estimates}) - (\ref{ineq of estimates_2}), one has,
\begin{align}
&\frac{1}{2}\hat{E}^m + \frac{\l_0}{4}\(E^m_h + E^m_\p\) \leq 0\nonumber\\
&\e E^m_h(t) + E^m_\p(t) \leq -\int_0^t E^m_h(s) + E^m_\p(s) ds + \frac{3}{\l_0}\(\e E^m_h(0) + E^m_\p(0)\)\nonumber\\
&E^m_h(t) \leq \frac{3}{\l_0}e^{-\frac{t}{\e}}\( E^m_h(0) + \frac{1}{\e}E^m_\p(0)\), \qd E^m_\p(t) \leq   \frac{3}{\l_0}e^{-t}\(\e E^m_h(0) + E^m_\p(0)\).
\end{align}
This completes the proof of (\ref{conclusion: PR}) in Theorem \ref{expo decay}.

\setcounter{section}{-1}

\appendix
\section*{Appendices}
\addcontentsline{toc}{section}{Appendices}
\renewcommand{\thesubsection}{\Alph{subsection}}
\renewcommand{\thesection}{A}

\section{The proof of Lemma \ref{ineqs}}
\numberwithin{equation}{section}
\label{proof of ineqs}

\counterwithin{theorem}{section}

\begin{proof}
\begin{itemize}
\item [(a)] By the definition of $\bu$ in (\ref{def u}), and (\ref{with e on t_2}), (\ref{eqn for u with e_1}),
\begin{align}
 &\la \pt^{\bk}\pt_x\p, v\sM\pt^{\bk}h \ra =  \la \pt^{\bk}\pt_x\p,\pt^{\bk}\bu \ra =   -\la \pt^{\bk}\p,\pt^{\bk}\pt_x\bu \ra  =\d\la  \pt^{\bk}\p, \pt^{\bk}\pt_t\s \ra \nonumber\\
  =&  -\d\la  \pt^{\bk}\p, \pt^{\bk}\pt_x^2\pt_t\p \ra = \d\la  \pt^{\bk}\pt_x\p, \pt^{\bk}\pt_x\pt_t\p \ra  = \frac{\d}{2}\pt_t\ll  \pt^{\bk}\pt_x\p \rl^2,
\end{align}
where the last equality of the first line is because of (\ref{eqn for u with e_1}), and the first equality of the second line is because of (\ref{with e on t_2}).
\item [(b)] First break $\pt^{\bk}h = \pt^{\bk}\s\sM + (\pt^{\bk}h - \pt^{\bk}\s\sM)$, and then use $\pt^{\bk}h - \pt^{\bk}\s\sM = \pt^{\bk}\bu\, v\sM + (1-\Pi)\pt^{\bk}h$, one has,
\begin{align}
&\la  \pt^{\bk} \pt_x\p\, \pt_v( \pt^{\bi}h),  \pt^{\bl}h \ra \nonumber\\
=& \la \pt^{\bk} \pt_x\p\, \pt_v( \pt^{\bi} \s\sM), \pt^{\bl} h  \ra + \la \pt^{\bk} \pt_x\p\,  \pt_v(\pt^{\bi} h - \pt^{\bi} \s \sM), \pt^{\bl} \s\sM + \left(\pt^{\bl}h -  \pt^{\bl} \s\sM\right)\ra \nonumber\\
= &-\frac{1}{2}\la \pt^{\bk} \pt_x\p\, \pt^{\bi} \s, \pt^{\bl}h v\sM \ra +  \la \pt^{\bk} \pt_x\p\,  \pt_v(\pt^{\bi} h) , \pt^{\bl} \s\sM\ra -  \la \pt^{\bk} \pt_x\p\,  \pt_v( \pt^{\bi} \s \sM), \pt^{\bl} \s\sM\ra \nonumber\\
 &+ \la \pt^{\bk} \pt_x\p, \pt_v\left(\pt^{\bi} \bu v\sM+ (1-\Pi)\pt^{\bi} h\right)\left(\pt^{\bl} \bu v\sM+ (1-\Pi)\pt^{\bl} h\right) \ra\nonumber\\
 \leq &-\frac{1}{2}\la \pt^{\bk}  \pt_x\p, \pt^{\bi} \s \pt^{\bl} \bu\ra -  \la \pt^{\bk}  \pt_x\p\, \pt^{\bi} h, \pt^{\bl} \s \pt_v(\sM)\ra + \frac{1}{2}\la \pt^{\bk} \pt_x\p,   \pt^{\bi} \s \pt^{\bl} \s vM\ra\nonumber\\
 &+\ll \pt^{\bk}  \pt_x\p\rl_{L^\infty_{x,z}}\( \ll \pt^{\bi} \bu \pt_v(v\sM)\rl^2 +  \ll \pt_v(1-\Pi)\pt^{\bi} h\rl^2 +  \ll \pt^{\bl} \bu(v\sM) \rl^2 + \ll (1-\Pi)\pt^{\bl} h\rl^2\)\nonumber\\
 \leq&-\frac{1}{2}\la \pt^{\bk}  \pt_x\p, \pt^{\bi} \s \pt^{\bl} \bu\ra +\frac{1}{2}\la \pt^{\bk}  \pt_x\p, \pt^{\bl} \s \pt^{\bi} \bu\ra + 0 \nonumber\\
&+C_1\ll \pt^{\bk}  \pt_x\p\rl_{H^1_z(H^1_x)}\( \frac{3}{4}\ll \pt^{\bi} \bu \rl^2 +  \ll (1-\Pi)\pt^{\bi} h\rl^2_\nu +  \ll \pt^{\bl}\bu \rl^2 + \ll (1-\Pi)\pt^{\bl} h\rl^2_\nu\),
\label{pc_1}
\end{align}
where the last inequality comes from the Sobolev embedding for 1D,
\begin{align}
\ll f\rl_{C^0_x} \leq C_1^2\ll f \rl_{H^1_x}, \qd \ll f\rl_{C^0_x} \leq C_1\ll f \rl_{H^1_z},\qd\ll f \rl_{C^0_{x,z}} \leq C_1\ll f \rl_{H^1_z(H^1_x)},  \quad \forall f\in H^1_z(H^1_x),
\label{def:Ck}
\end{align}
for some constant $C_1\geq 1$. \\
Next,
\begin{align}
&- \frac{1}{2} \la v\pt^{\bk} \pt_x\p\, \pt^{\bi} h,  \pt^{\bl}h \ra\nonumber\\
=&-\frac{1}{2}\la v\pt^{\bk}\pt_x\p\,\pt^{\bi}\s\sM, \pt^{\bl}h \ra -\frac{1}{2}\la v\pt^{\bk}\pt_x\p\left(\pt^{\bi}h - \pt^{\bi}\s\sM\right) , \pt^{\bl} \s\sM + \left(  \pt^{\bl} h - \pt^{\bl} \s\sM\right) \ra\nonumber\\
=&-\frac{1}{2}\la \pt^{\bk}\pt_x\p, \pt^{\bi}\s \pt^{\bl}\bu \ra  - \frac{1}{2}\la \pt^{\bk}\pt_x\p, v\pt^{\bi}h\, \pt^{\bl}\s\sM \ra + \frac{1}{2}\la \pt^{\bk}\pt_x\p, v\pt^{\bi}\s\sM\, \pt^{\bl}\s\sM\ra\nonumber\\
&-\frac{1}{2}\la \pt^{\bk}\pt_x\p, v\left( \pt^{\bi}\bu v\sM + (1-\Pi)\pt^{\bi}h  \right) \left( \pt^{\bl}\bu v\sM + (1-\Pi)\pt^{\bl}h \right) \ra\nonumber\\
\leq & -\frac{1}{2}\la \pt^{\bk}\pt_x\p, \pt^{\bi}\s \pt^{\bl}\bu \ra  - \frac{1}{2}\la \pt^{\bk}\pt_x\p, \pt^{\bi}\bu \pt^{\bl}\s \ra + 0\nonumber\\
&+\frac{1}{2} \ll \pt^{\bk}\pt_x\p \rl_{L^\infty_{x,z}}\left(\int |v|(\pt^{\bi}\bu v\sM)^2 d\mu  +\int |v| \left((1-\Pi)\pt^{\bi}h\right)^2d\mu   \right. \nonumber\\
&\left. + \int |v|(\pt^{\bl}\bu v\sM)^2d\mu + \int |v|((1-\Pi) \pt^{\bl}h )^2d\mu \right)\nonumber\\
\leq & -\frac{1}{2}\la \pt^{\bk}\pt_x\p, \pt^{\bi}\s \pt^{\bl}\bu \ra  - \frac{1}{2}\la \pt^{\bk}\pt_x\p, \pt^{\bi}\bu \pt^{\bl}\s \ra\nonumber\\
&+\frac{1}{2}C_1\ll \pt^{\bk}\pt_x\p \rl_{H^1_z(H^1_x)} \left(2\ll \pt^{\bi} \bu\rl^2+ \frac{1}{2}\ll (1-\Pi)\pt^{\bi}h \rl^2_\nu  + 2\ll \pt^{\bl}\bu \rl^2 +\frac{1}{2}\ll (1-\Pi) \pt^{\bl}h \rl^2_\nu \right).\nonumber\\
\label{pc_2}
\end{align}
Therefore (\ref{pc_1}) + (\ref{pc_2}) gives,
\begin{align}
 &\la \pt^{\bk} \pt_x\p\, \pt_v(\pt^{\bi}h), \pt^{\bl}h \ra - \frac{1}{2} \la v\pt^{\bk} \pt_x\p\, \pt^{\bi} h,  \pt^{\bl}h \ra\nonumber\\
 \leq& -\la \pt^{\bk}\pt_x\p, \pt^{\bi}\s \pt^{\bl}\bu \ra +C_1\ll \pt^{\bk}\pt_x\p \rl_{H^1_z(H^1_x)} \left(2\ll \pt^{\bi} \bu\rl^2+ 2\ll (1-\Pi)\pt^{\bi}h \rl^2_\nu + 2\ll \pt^{\bl}\bu \rl^2  +2\ll (1-\Pi) \pt^{\bl}h \rl^2_\nu \right)\nonumber\\
 \leq& C_1\ll \pt^{\bk}\pt_x\p \rl_{H^1_z(H^1_x)} \left(a\ll\pt^{\bi}\s \rl^2 +  2\ll \pt^{\bi} \bu\rl^2 + 2\ll (1-\Pi)\pt^{\bi}h \rl^2_\nu + \(2+\frac{1}{a}\)\ll \pt^{\bl}\bu \rl^2 +2\ll (1-\Pi) \pt^{\bl}h \rl^2_\nu \right)\nonumber,
 \end{align}
 for $a$ to be determined later.

{\item [(c)] 
For the term $\la \pt^{\bk} \pt_x\p, \pt_v\left(\pt^{\bi} \bu v\sM+ (1-\Pi)\pt^{\bi} h\right)\left(\pt^{\bl} \bu v\sM+ (1-\Pi)\pt^{\bl} h\right) \ra$, one can also bounded by,
\begin{align}
	&\la \pt^{\bk} \pt_x\p, \pt_v\left(\pt^{\bi} \bu v\sM+ (1-\Pi)\pt^{\bi} h\right)\left(\pt^{\bl} \bu v\sM+ (1-\Pi)\pt^{\bl} h\right) \ra\nonumber\\
	\leq &  \int_{I_z} \ll \pt^{\bk}\pt_x\p \rl_{L^\infty_x} \ll \pt_v\(\pt^{\bi} \bu v\sM+ (1-\Pi)\pt^{\bi} h\) \rl_{L^2_{x,v}} \ll \pt^{\bl} \bu v\sM+ (1-\Pi)\pt^{\bl} h \rl_{L^2_{x,v}}  d\mu(z)\nonumber\\
	\leq & C_1 \ll \ll \pt_v\(\pt^{\bi} \bu v\sM+ (1-\Pi)\pt^{\bi} h \)\rl_{L^2_{x,v}}\rl_{L^\infty_z} \sqrt{\ll \pt^{\bk}\pt_x\p \rl^2 + \ll \pt^{\bk}\pt_x^2\p \rl^2} \ll  \pt^{\bl} \bu v\sM+ (1-\Pi)\pt^{\bl} h \rl\nonumber\\
	\leq & C_1^2  \sqrt{\ll \pt^{\bk}\pt_x\p \rl^2 + \ll \pt^{\bk}\pt_x^2\p \rl^2} \( \frac{1}{2}\sum_{i\leq 1}\ll\pt_v\( \pt^{\bi} \bu v\sM+ (1-\Pi)\pt^{\bi}\pt_z^i h\) \rl^2 + \frac{1}{2}\ll  \pt^{\bl} \bu v\sM+ (1-\Pi)\pt^{\bl} h\rl^2\)\nonumber\\
	\leq & C_1^2  \sqrt{\ll \pt^{\bk}\pt_x\p \rl^2 + \ll \pt^{\bk}\pt_x^2\p \rl^2}\( \frac{3}{4}\sum_{i\leq 1}\ll \pt^{\bi}\pt_z^i \bu \rl^2 + \sum_{i\leq 1}\ll (1-\Pi)\pt^{\bi}\pt_z^i h \rl_\nu^2 + \ll  \pt^{\bl} \bu \rl^2 + \ll (1-\Pi)\pt^{\bl} h\rl_\nu^2\)
\end{align}
Similarly,
\begin{align}
    &-\frac{1}{2}\la \pt^{\bk}\pt_x\p, v\left( \pt^{\bi}\bu v\sM + (1-\Pi)\pt^{\bi}h  \right) \left( \pt^{\bl}\bu v\sM + (1-\Pi)\pt^{\bl}h \right) \ra\nonumber\\
    \leq & \frac{C_1^2}{2} \sqrt{\ll \pt^{\bk}\pt_x\p \rl^2 +\ll \pt^{\bk}\pt^2_x\p \rl^2} \( 2\sum_{i\leq 1}\ll \pt^{\bi}\pt_z^i \bu\rl^2 + \frac{1}{2}\sum_{i\leq 1}\ll (1-\Pi)\pt^{\bi}\pt_z^ih \rl^2_\nu  \right.\nonumber\\
    &\left. + 2\ll \pt^{\bl}\bu \rl^2 +\frac{1}{2}\ll (1-\Pi) \pt^{\bl}h \rl^2_\nu \)
\end{align}
\begin{align}
    &-\la \pt^{\bk}\pt_x\p, \pt^{\bi}\s \pt^{\bl}\bu \ra \leq  C_1^2  \sqrt{\ll \pt^{\bk}\pt_x\p \rl^2 + \ll \pt^{\bk}\pt_x^2\p \rl^2} \( a\sum_{i\leq 1}\ll\pt^{\bi}\pt_z^i \s \rl^2 + \frac{1}{a}\ll  \pt^{\bl} \bu \rl^2\)
\end{align}
which gives,
\begin{align}
 &\la \pt^{\bk} \pt_x\p\, \pt_v(\pt^{\bi}h), \pt^{\bl}h \ra - \frac{1}{2} \la v\pt^{\bk} \pt_x\p\, \pt^{\bi} h,  \pt^{\bl}h \ra\nonumber\\
 \leq&  C_1^2  \sqrt{\ll \pt^{\bk}\pt_x\p \rl^2 + \ll \pt^{\bk}\pt_x^2\p \rl^2} \left(a\sum_{i\leq 1}\ll\pt^{\bi}\pt_z^i \s \rl^2 +  2\sum_{i\leq 1}\ll \pt^{\bi}\pt_z^i \bu\rl^2 + 2\sum_{i\leq 1}\ll (1-\Pi)\pt^{\bi}\pt_z^ih \rl^2_\nu \right.\nonumber\\
 &\left. + \(2+\frac{1}{a}\)\ll \pt^{\bl}\bu \rl^2 +2\ll (1-\Pi) \pt^{\bl}h \rl^2_\nu \right)\nonumber,
 \end{align}
 for $a$ to be determined later.
}

 \item [(d)] Since
 \begin{align}
&\la  \pt^{\bk} \pt_x\p\, \pt_v( \pt^{\bi}h),  \pt^{\bl}h \ra \nonumber\\
=& \la \pt^{\bk} \pt_x\p\, \pt_v( \pt^{\bi} h), \pt^{\bl} \s\sM  \ra + \la \pt^{\bk} \pt_x\p\,  \pt_v(\pt^{\bi} h), \pt^{\bl} h - \pt^{\bl}  \s \sM\ra \nonumber\\
= &-\la \pt^{\bk} \pt_x\p\, \pt^{\bi} h, \pt^{\bl} \s\pt_v(\sM)  \ra - \la \pt^{\bk} \pt_x\p\,  \pt^{\bi} h, \pt_v(  \pt^{\bl}\bu v\sM +  (1-\Pi)\pt^{\bl}h)\ra \nonumber\\
\leq &\frac{1}{2}\la \pt^{\bk} \pt_x\p, \pt^{\bi} \bu\pt^{\bl} \s  \ra + \la \lv \pt^{\bi} h\rv, \, \lv \pt^{\bk}\pt_x\p\(\pt^{\bl}\bu \pt_v( v\sM) + \pt_v(1-\Pi)\pt^{\bl}h\)\rv \ra  \nonumber\\
\leq &\frac{1}{2}\la \pt^{\bk}\pt_x\p, \pt^{\bl}\s \pt^{\bi}\bu \ra + \ll\lv  \pt^{\bi} h\rv\rl_{L^\infty_{x,z}} ( \frac{a}{2}\ll\pt^{\bk}\pt_x\p\rl^2 +  \frac{1}{a}\ll\pt^{\bl} \bu \pt_v(v\sM) \rl^2 +\frac{1}{a}\ll \pt_v(1-\Pi)\pt^{\bl}h\rl^2)  \nonumber\\
\leq &\frac{1}{2}\la \pt^{\bk}\pt_x\p, \pt^{\bl}\s \pt^{\bi}\bu \ra + C_1\ll \pt^{\bi} h\rl_{H^1_z(H^1_x)} (\frac{a}{2} \ll\pt^{\bk}\pt_x\p\rl^2 +  \frac{3}{4a}\ll\pt^{\bl} \bu  \rl^2 + \frac{1}{a}\ll (1-\Pi)\pt^{\bl}h\rl^2_\nu),
\label{pc_3}
\end{align}

Next, similar to (\ref{pc_3}),
 \begin{align}
 &- \frac{1}{2} \la v\pt^{\bk} \pt_x\p\, \pt^{\bi} h,  \pt^{\bl}h \ra = - \frac{1}{2} \la \pt^{\bk} \pt_x\p, v\pt^{\bi} h\,  \pt^{\bl}h \ra\nonumber\\
 =&-\frac{1}{2}\la \pt^{\bk}\pt_x\p,v \pt^{\bi}h\, \pt^{\bl}\s \sM \ra -\frac{1}{2}\la \pt^{\bk}\pt_x\p, v\pt^{\bi}h  \left(\pt^{\bl} \bu v\sM +  (1-\Pi) \pt^{\bl} h\right)\ra\nonumber\\
 \leq &-\frac{1}{2}\la \pt^{\bk}\pt_x\p, \pt^{\bl}\s \pt^{\bi}\bu \ra +\frac{1}{2}\la  \lv \pt^{\bi} h\rv,\,  \lv\pt^{\bk}\pt_x\p\(\pt^{\bl}\bu v^2\sM + v(1-\Pi)\pt^{\bl}h\)\rv \ra  \nonumber\\
\leq &-\frac{1}{2}\la \pt^{\bk}\pt_x\p, \pt^{\bl}\s \pt^{\bi}\bu \ra + \frac{C_1}{2}\ll \pt^{\bi} h\rl_{H^1_z(H^1_x)} ( \frac{a}{2}\ll\pt^{\bk}\pt_x\p\rl^2 + \frac{1}{a} \ll\pt^{\bl} \bu (v^2\sM) \rl^2 +\frac{1}{a}\ll v(1-\Pi)\pt^{\bl}h\rl^2)  \nonumber\\
\leq &-\frac{1}{2}\la \pt^{\bk}\pt_x\p, \pt^{\bl}\s \pt^{\bi}\bu \ra + \frac{C_1}{2}\ll \pt^{\bi} h\rl_{H^1_z(H^1_x)} (\frac{a}{2} \ll\pt^{\bk}\pt_x\p\rl^2 +  \frac{3}{a}\ll\pt^{\bl} \bu  \rl^2 + \frac{1}{a}\ll (1-\Pi)\pt^{\bl}h\rl^2_\nu).
\label{pc_4}
 \end{align}
 Therefore (\ref{pc_3}) + (\ref{pc_4}) gives,
\begin{align}
 &\la \pt^{\bk} \pt_x\p\, \pt_v(\pt^{\bi}h), \pt^{\bl}h \ra - \frac{1}{2} \la v\pt^{\bk} \pt_x\p\, \pt^{\bi} h,  \pt^{\bl}h \ra\nonumber\\
 \leq&  C_1\ll \pt^{\bi} h\rl_{H^1_z(H^1_x)} (a \ll\pt^{\bk}\pt_x\p\rl^2 +  \frac{3}{a}\ll\pt^{\bl} \bu  \rl^2 + \frac{2}{a}\ll (1-\Pi)\pt^{\bl}h\rl^2_\nu).
  \end{align}

{\item [(e)]
Similar to the proof in (c), based on the estimates in (d), one can bound the same term by,
\begin{align}
     &\la \pt^{\bk} \pt_x\p\, \pt_v(\pt^{\bi}h), \pt^{\bl}h \ra - \frac{1}{2} \la v\pt^{\bk} \pt_x\p\, \pt^{\bi} h,  \pt^{\bl}h \ra\nonumber\\
     \leq&  C_1^2\sqrt{ \ll \pt^{\bi} h\rl^2 + \ll \pt^{\bi} \pt_xh\rl^2} (a \sum_{i\leq 1}\ll\pt^{\bk}\pt_z^i\pt_x\p\rl^2 +  \frac{3}{a}\ll\pt^{\bl} \bu  \rl^2 + \frac{2}{a}\ll (1-\Pi)\pt^{\bl}h\rl^2_\nu).
\end{align}
}
 
 \item [(f)] By (\ref{eqn for u with e_1}) and (\ref{with e on t_1}) one derives,
\begin{align}
\pt_x(\pt^{\bk}\pt_x\pt_t\p) = - \pt^{\bk}\pt_t\s = \pt_x(\frac{1}{\d}\pt^{\bk}\bu),
\end{align}
integrating it from $-\infty$ to x implies,
\begin{align}
&\pt^{\bk}\pt_x\pt_t\p(x) = \frac{1}{\d}\pt^{\bk}\bu(x),
\end{align}
Hence,
\begin{align}
\ll\pt^{\bk}\pt_x\pt_t\p \rl^2\leq \frac{1}{\d^2} \ll\pt^{\bk}\bu \rl^2.
\end{align}

\end{itemize}

\end{proof}

\renewcommand{\thesection}{B}
\section{The proof of Theorem \ref{new energy}}

\label{proof of new energy}
\begin{proof}

The proof of Theorem \ref{new energy} is almost the same as the proof of Theorem \ref{expo decay}, expect for the estimates on the nonlinear term $\sum_{j = 0}^1\sum_{l = 0}^{m}\pt_x^j\pt_z^{l}\(\pt_x\p\, \pt_vh - \frac{v}{2} \pt_x\p\,h\)$, and, $\sum_{j = 0}^1\sum_{l = 0}^{m}\pt_x^j\pt_z^{l}\(\pt_x\p \s\)$. We first estimate the case of $j=0$. \\
Taking $\pt_z^l$ on (\ref{with e on t_1}), and multiplying by $\(\frac{l+1}{l!}\)^2\pt_z^l h$, then integrating it over $\mu(x,v,z)$, one has,
\begin{align}
&\frac{\d}{2}\pt_t\(\e\ll \frac{l+1}{l!}\pt_z^lh\rl^2 + \d\ll \frac{l+1}{l!}\pt_z^l\pt_x\p\rl^2\) + \l_0\ll\frac{l+1}{l!}(1-\Pi)\pt_z^lh \rl_\nu^2 + \ll\frac{l+1}{l!}\pt_z^lu \rl^2\nonumber\\
\leq & \d\sum_{i = 0}^l \frac{l!}{i!(l-i)!} \frac{l+1}{l!} \la \pt_z^{l-i}  \pt_x\p \(\pt_v - \frac{v}{2}\)\pt_z^ih, \frac{l+1}{l!}\pt_z^lh \ra\nonumber\\
= & \d\sum_{i = 0}^l\frac{ l+1}{(l-i+1)(i+1)} \la\(\frac{l-i+1}{(l-i)!} \pt_z^{l-i}\pt_x\p \) \(\frac{i+1}{i!} \pt_z^ih\), \frac{l+1}{l!}\pt_z^lh \ra\nonumber\\
\leq &\d\sum_{i = 0}^l\(\frac{ 1}{i+1}+ \frac{ 1}{l-i+1}\) \la\(\frac{l-i+1}{(l-i)!} \pt_z^{l-i}\pt_x\p \) \(\frac{i+1}{i!} \pt_z^ih\), \frac{l+1}{l!}\pt_z^lh \ra\nonumber\\
\leq &\d C_1\sum_{i = 0}^{l-1}\frac{ 1}{i+1}
\ll \frac{i+1}{i!} \pt_z^ih\rl_{H^1_z(H^1_x)} \left(  \frac{3}{a}\ll\frac{l+1}{l!} \pt_z^lu  \rl^2 + \frac{2}{a}\ll \frac{l+1}{l!} (1-\Pi)\pt_z^lh\rl^2_\nu\right.\nonumber\\
&\left.+ a\ll\frac{l-i+1}{(l-i)!} \pt_z^{l-i}\pt_x\p\rl^2 \right)\nonumber\\
& + \d C_1\sum_{i = 0}^{l-1}\frac{ 1}{l-i+1}
\ll \frac{l-i+1}{(l-i)!} \pt_z^{l-i}\pt_x\p \rl_{H^1_z(H^1_x)} \left( \(2+ \frac{1}{a}\)\ll\frac{l+1}{l!} \pt_z^lu  \rl^2 + 2\ll \frac{l+1}{l!} (1-\Pi)\pt_z^lh\rl^2_\nu\right.\nonumber\\
&\left.+ a\ll\frac{i+1}{i!} \pt_z^i\s\rl^2 + 2\ll\frac{i+1}{i!} \pt_z^iu\rl^2 + 2\ll\frac{i+1}{i!} (1-\Pi)\pt_z^ih\rl^2 \right)\nonumber\\
&+\d C_1^2\sqrt{ \ll \frac{l+1}{l!} \pt_z^lh\rl^2 + \ll\frac{l+1}{l!} \pt_z^l\pt_xh\rl^2} \(a \sum_{j\leq 1}\ll \pt_x^j\pt_x\p\rl^2\right.\nonumber\\
&\left. +  \frac{3}{a}\ll\frac{l+1}{l!} \pt_z^lu  \rl^2 + \frac{2}{a}\ll\frac{l+1}{l!} (1-\Pi)\pt_z^lh\rl^2_\nu\), \label{ineq_1}
\end{align}
where the last inequality comes from Lemma \ref{ineqs} (d), (b), (e). Notice
\begin{align}
	\(\sum_{i = 0}^{l-1}\frac{a_i}{i+1}\)^2 =& \sum_{i,j = 0}^{l-1}\frac{a_ia_j}{(i+1)(j+1)} \leq\frac{1}{2} \sum_{i ,j = 0}^{l-1} \(\frac{(i+1)^2}{(j+1)^2}\frac{a_i^2}{(i+1)^2} + \frac{(j+1)^2}{(i+1)^2}\frac{a_j^2}{(j+1)^2}\) \nonumber\\
	=& \sum_{i = 0}^{l-1}\(\sum_{j =0}^{l-1} \frac{1}{(j+1)^2}\)a_i^2 \leq (A')^2 \sum_{i = 0}^{l-1}a_i^2, 
\end{align}
where $A' = \sqrt{\sum_{i = 0}^\infty\frac{1}{(i+1)^{2}}}$. Then one can bound, 
\begin{align}
	& \sum_{i = 0}^{l-1} \frac{1}{i+1}\ll \frac{i+1}{i!} \pt_z^ih\rl_{H^1_z(H^1_x)} \leq 2A'\sqrt{\t{E}^l_h} \leq 2A' \sqrt{\t{E}^m_h},
\end{align}
and,
\begin{align}
	&\sum_{l = 1}^m\sum_{i = 0}^{l-1}\frac{ 1}{i+1}\ll \frac{i+1}{i!} \pt_z^ih\rl_{H^1_z(H^1_x)}\ll\frac{l-i+1}{(l-i)!} \pt_z^{l-i}\pt_x\p\rl^2\nonumber\\
	\leq & \(\sum_{i = 0}^{m-1}\frac{1}{i+1}\ll \frac{i+1}{i!} \pt_z^ih\rl_{H^1_z(H^1_x)}\)\(\sum_{i = 0}^m\ll\frac{i+1}{i!} \pt_z^i\pt_x\p\rl^2 \) \leq 2A'\sqrt{\t{E}^m_h} \t{D}^m_\p.
\end{align}
Here $\t{D}^m_\p$ is the corresponding new dissipation of $\pt_x\p$ in the new norm, similar for $\t{D}^m_\s, \t{D}^m_u$. All other terms in (\ref{ineq_1}) can be similarly bounded. Thus $\sum_{l = 0}^m(\ref{ineq_1})$ gives, 
\begin{align}
	&\frac{\d}{2}\pt_{t} \left[ \b \t{E}^{m,0}_h + \d \t{E}^{m,0}_\p\right] + \l_0\t{D}^{m,0}_h + \t{D}^{m,0}_u  \nonumber\\
\leq& 2\d A'C_1\sqrt{\t{E}^m_h}\(\frac{3}{a}\t{D}^m_u +\frac{2}{a} \t{D}^m_h + a\t{D}^m_\p\)  + 2\d A'C_1\sqrt{\t{E}^m_\p}\( \(4+ \frac{1}{a}\)\t{D}^m_u+ 4\t{D}^m_h+ a\t{D}^m_\s\)\nonumber\\
	&+ 2\d A'C_1^2\sqrt{\t{E}^m_h} \(a\t{D}^m_\p + \frac{3}{a}\t{D}^m_u + \frac{2}{a}\t{D}^m_h\)\nonumber\\
	\leq & 4\d A'C_1\sqrt{\t{E}^m_h}\(\frac{3}{a}\t{D}^{m,0}_u +\frac{2}{a} \t{D}^{m,0}_h + a\t{D}^m_\p\) + 2\d A'C_1\sqrt{\t{E}^m_\p}\( \(4+ \frac{1}{a}\)\t{D}^{m,0}_u+ 4\t{D}^{m,0}_h+ a\t{D}^{m,0}_\s\).\nonumber\\
\end{align}
One can use similar method to bound other nonlinear terms. We omit the details here. Now one can see that the constant $A'$ is independent of $m$, which leads to the independence of $m$ in the initial condition.

\end{proof}

\bibliographystyle{plain}

\end{document}